\newdimen\LineSpace
\tikzset{
    line space/.code={\LineSpace=#1},
    line space=3pt
}
\newtheorem{thm}{Theorem}[section]
\newtheorem{cor}[thm]{Corollary}
\newtheorem{lem}[thm]{Lemma}
\newtheorem{prop}[thm]{Proposition}
\newtheorem{prob}{\sc Problem}
\theoremstyle{definition}
\newtheorem{eks}[thm]{\sc Example}
\theoremstyle{remark}
\numberwithin{equation}{section}
\DeclareMathOperator{\Lip}{Lip}
\DeclareMathOperator{\dent}{dent}
\DeclareMathOperator{\clconv}{\overline{conv}}
\DeclareMathOperator{\conv}{conv}
\newcommand{\seg}[3]{[#1,#2]_{#3}}
\title[Daugavet- and $\Delta$-points in Lipschitz-free spaces]{Characterizations of Daugavet- and delta-points in Lipschitz-free spaces}
\author{Triinu Veeorg}
\address{Institute of Mathematics and Statistics, University of Tartu, Narva~mnt 18, 51009, Tartu, Estonia}
\email{triinu.veeorg@ut.ee}
\subjclass{Primary 46B04, 46B20; Secondary 46B22}
\keywords{Lipschitz-free spaces; Daugavet property, Daugavet-points, delta-points; Radon--Nikod\'ym property.}
\thanks{This work was supported by the Estonian Research
Council grant (PRG1598).}
\begin{document}
\begin{abstract}
    A norm one element $x$ of a Banach space is a Daugavet-point (respectively,~a $\Delta$-point) if every slice of the unit ball (respectively,~every slice  of the unit ball containing $x$) contains an element, which is almost at distance 2 from $x$. We characterize Daugavet- and $\Delta$-points in Lipschitz-free spaces. Furthermore, we construct a Lipschitz-free space with the Radon--Nikod\'ym property and a Daugavet-point; this is the first known example of such a Banach space.
\end{abstract}

\maketitle

\section{Introduction}
Let $X$ be a Banach space and $x\in S_X$. According to \cite{AHLP} we say that
\begin{enumerate}
    \item $x$ is a \emph{Daugavet-point} if for every slice $S$ of $B_X$ and for every $\varepsilon>0$ there 
exists $y\in S$ such that $\|x-y\|\ge 2-\varepsilon$;
    \item $x$ is a \emph{$\Delta$-point} if for every slice $S$ of $B_X$ with $x\in S$ and for every $\varepsilon>0$ there 
exists $y\in S$ such that $\|x-y\|\ge 2-\varepsilon$.
\end{enumerate}
These recently introduced concepts opened a new direction in the study of the well-known Daugavet property and diameter-2 properties (see, e.g., \cite{ALM}, \cite{DJR}, \cite{HPV}, \cite{JRZ}). In fact, a Banach space has the Daugavet property (respectively, diametral local diameter two property) if and only if all unit sphere elements are Daugavet-points (respectively, $\Delta$-points) (see \cite{AHLP}, \cite{DW01}).

In this paper we shall explore Daugavet- and $\Delta$-points further in Lipschitz-free spaces, which can be considered a continuation of the study initiated by Jung and Rueda Zoca in \cite{JRZ}.  Throughout the paper, $M$ is a metric space with metric $d$ and a fixed point 0. We denote by $\Lip_0(M)$ the Banach space of all Lipschitz functions $f\colon M\rightarrow\mathbb{R}$ with $f(0)=0$ equipped with the obvious linear structure and the norm
$$\|f\|:=\sup\Big\{\frac{|f(x)-f(y)|}{d(x,y)}\colon x, y\in M, x\neq y\Big\}.$$ 
%For a Lipschitz function $g\colon M\to\mathbb R$, the function $f:=g-g(0)$ is in $\Lip_0(M)$ and $f(x)-f(y)=g(x)-g(y)$ for every $x,y\in M$. This transition is kept in mind in Lemmas~\ref{function} and \ref{f_mu}, where the defined Lipschitz function is considered to be in $\Lip_0(M)$.
Let $\delta\colon M\rightarrow \Lip_0(M)^*$ be the canonical isometric embedding of $M$ into $\Lip_0(M)^*$, which is given by $x\mapsto \delta_x$, where $\delta_x(f)=f(x)$. 
The norm closed linear span of $\delta(M)$ in $\Lip_0(M)^*$ is called the \emph{Lipschitz-free space over $M$} and is denoted by $\mathcal{F}(M)$ (see \cite{GOD} and \cite{Weaver} for the background).  An element in $\mathcal{F}(M)$ of the form 
$$m_{xy}:=\frac{\delta_x-\delta_y}{d(x,y)}$$
for $x, y\in M$ with $x\neq y$ is called a \emph{molecule}. We denote the set of all molecules by $\mathcal{M}(M)$.
Clearly $\mathcal{M}(M)\subseteq S_{\mathcal{F}(M)}$ and it is well known that
$$\clconv\big(\mathcal{M}(M)\big)=B_{\mathcal{F}(M)}$$
and
$$\mathcal{F}(M)^*= \Lip_0(M).$$
%For a Lipschitz function $g\colon M\to\mathbb R$, the function %$f:=g-g(0)$ is in $\Lip_0(M)=\mathcal{F}(M)^*$. We may define
%$$g(\mu):=f(\mu),$$
%for every $\mu\in \mathcal{F}(M)$. Clearly then 
%$$g(m_{xy})=f(m_{xy})=\frac{f(x)-f(y)}{d(x,y)}=\frac{g(x)-g(y)}{d(x,y)}$$
%for every $x,y\in M$ with $x\neq y$. This discussion is kept in mind in Lemmas~\ref{function} and \ref{f_mu}, where the defined Lipschitz function is considered to be in $\mathcal{F}(M)^*$. 

In \cite{JRZ}, both Daugavet- and $\Delta$-points were studied in Lipschitz-free spaces, and the following characterizations were provided:
\begin{itemize}
    \item If $M$ is a compact metric space, then $\mu\in S_{\mathcal{F}(M)}$ is a Daugavet-point if and only if $\|\mu-\nu\|=2$ for every  $\nu \in \dent (B_{\mathcal{F}(M)})$ (see {\cite[Theorem~3.2]{JRZ}}).
    \item Let $x, y\in M$ with $x\neq y$. Then $m_{xy}\in S_{\mathcal{F}(M)}$ is a $\Delta$-point if and only if for every $\varepsilon>0$ and slice $S$ of $S_{\mathcal{F}(M)}$ with $m_{xy}\in S$ there exist $u, v\in M$ with $u\neq v$ such that $m_{uv}\in S$ and $d(u,v)<\varepsilon$ (see {\cite[Theorem~4.7]{JRZ}}).
\end{itemize}

This left open two important questions:
\begin{itemize}
    \item How to characterize Daugavet-points in Lipschitz-free spaces over any metric space $M$?
    \item How to characterize all $\Delta$-points  in Lipschitz-free spaces?
\end{itemize}

This paper addresses these problems. In Section 2, we provide a generalization of {\cite[Theorem~3.2]{JRZ}} for all metric spaces $M$ (see Theorem \ref{general_daugavet}).

In Section 3, we give an example of a metric space $M$ such that the Lipschitz-free space $\mathcal{F}(M)$ has the Radon--Nikod\'ym property and also a Daugavet-point (see Example \ref{RNP_Daug_ex}). To our knowledge, this is the first example of such a Banach space.

In Section 4, we focus on $\Delta$-points. We generalize {\cite[Theorem~4.7]{JRZ}} for convex combinations of molecules  (see Theorem \ref{Delta_general}). Furthermore, we note that a convex combination of molecules, which are $\Delta$-points, is also a $\Delta$-point, provided it has norm 1  (see Corollary \ref{Delta_cor1}). However, the converse does not hold in general, a convex combination of molecules can be a $\Delta$-point even if none of those molecules is a $\Delta$-point  (see Example \ref{example_delta}).

We consider only real Banach spaces and use standard notation. For a Banach space $X$ we will denote the closed unit ball by $B_X$, the unit sphere by $S_X$ and the dual space by $X^*$. By $B(x,r)$ we denote the closed ball of radius $r$ with the center $x$. For convenience we agree that $B(x,0)=\{x\}$.

First, we observe that {\cite[Theorem~2.6]{JRZ}} can be equivalently written in the following form.
\begin{lem}[cf. {\cite[Theorem~2.6]{JRZ}}]\label{molekul_dis}
    Let $\mu \in S_{\mathcal{F}(M)}$. Then for every $\varepsilon>0$ there exists $\delta>0$ such that if $u, v\in M$ satisfy $0<d(u,v)<\delta$, then $\|\mu - m_{uv}\|\ge 2-\varepsilon$.
\end{lem}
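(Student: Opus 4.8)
The plan is to read Lemma~\ref{molekul_dis} off from \cite[Theorem~2.6]{JRZ} by the routine passage between the ``sequential'' and the ``$\varepsilon$--$\delta$'' formulations of one and the same statement. Recall that \cite[Theorem~2.6]{JRZ} asserts that for $\mu\in S_{\mathcal{F}(M)}$ one has $\|\mu-m_{u_nv_n}\|\to 2$ for every pair of sequences $(u_n)_n,(v_n)_n$ in $M$ with $u_n\neq v_n$ and $d(u_n,v_n)\to 0$; since $m_{u_nv_n}\in S_{\mathcal{F}(M)}$ and hence $\|\mu-m_{u_nv_n}\|\le 2$, this is equivalent to saying $\liminf_n\|\mu-m_{u_nv_n}\|\ge 2$.

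First I would dispatch the easy direction. If the conclusion of the lemma holds for $\mu$, then for sequences as above and any $\varepsilon>0$ we pick $\delta>0$ as in the lemma and $N$ with $d(u_n,v_n)<\delta$ for $n\ge N$; then $\|\mu-m_{u_nv_n}\|\ge 2-\varepsilon$ for all $n\ge N$, so $\|\mu-m_{u_nv_n}\|\to 2$. Thus the $\varepsilon$--$\delta$ form implies \cite[Theorem~2.6]{JRZ}, which confirms that the two formulations are genuinely equivalent.

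For the converse — the implication actually needed — I would argue by contraposition. Suppose the conclusion of the lemma fails for some $\mu\in S_{\mathcal{F}(M)}$: then there is $\varepsilon_0>0$ such that for every $\delta>0$ there exist $u,v\in M$ with $0<d(u,v)<\delta$ and $\|\mu-m_{uv}\|<2-\varepsilon_0$. Taking $\delta=1/n$ for $n\in\mathbb{N}$ produces sequences $(u_n)_n,(v_n)_n$ with $u_n\neq v_n$, $d(u_n,v_n)\to 0$, and $\|\mu-m_{u_nv_n}\|<2-\varepsilon_0$ for every $n$, whence $\limsup_n\|\mu-m_{u_nv_n}\|\le 2-\varepsilon_0<2$. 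This contradicts \cite[Theorem~2.6]{JRZ}, and the lemma follows.

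There is no real obstacle here: the entire content of the lemma is already contained in \cite[Theorem~2.6]{JRZ}, and the proof is just the observation that a convergence statement quantified over all null-sequences of gaps $d(u_n,v_n)$ carries the same information as a uniform $\varepsilon$--$\delta$ estimate. The one point worth double-checking is that the sequences extracted in the contraposition step meet exactly the hypotheses under which \cite[Theorem~2.6]{JRZ} is phrased; but since that result imposes nothing on the sequences beyond $u_n\neq v_n$ and $d(u_n,v_n)\to 0$, the extracted sequences qualify directly and no passage to a subsequence is needed.
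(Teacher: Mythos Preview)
Your proposal is correct and aligns with the paper's treatment: the paper does not give a proof at all, merely stating that \cite[Theorem~2.6]{JRZ} ``can be equivalently written'' in this $\varepsilon$--$\delta$ form, and your argument is exactly the routine sequential-to-uniform passage that justifies that sentence. There is nothing to add.
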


We also present a preliminary result that will be used throughout the paper.
\begin{lem}\label{lip_norm}
Let $x, y,u, v\in M$ with $x\neq y,u\neq v$, and $\varepsilon>0$. The following statements are equivalent:
\begin{enumerate}[label={(\roman*)}]
    \item $\|m_{xy}+m_{uv}\|\ge 2-\varepsilon$;
    \item $d(x,v)+d(u,y)\ge d(x,y)+d(u,v)-\varepsilon\max\big\{d(x,y),d(u,v)\big\}$.
\end{enumerate}
Furthermore, the equalities in $(i)$ and $(ii)$ hold simultaneously and in that case
$$\|m_{xy}+m_{uv}\|= \frac{d(x,v)+d(u,y)+|d(x,y)-d(u,v)|}{\max\big\{d(x,y),d(u,v)\big\}}.$$
\end{lem}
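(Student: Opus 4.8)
My plan is to establish a closed formula for the norm and then read off everything else by elementary rearrangement. By the symmetry of both $(i)$ and $(ii)$, and of $\|m_{xy}+m_{uv}\|$, under interchanging the pairs $(x,y)$ and $(u,v)$, I may assume $D:=d(x,y)\ge d(u,v)$, and I claim
\[
\|m_{xy}+m_{uv}\|=\min\Big\{2,\ \frac{d(x,v)+d(u,y)+\big(d(x,y)-d(u,v)\big)}{d(x,y)}\Big\}.
\]
Granting this, $(i)$ reads $\min\{2,\,\Sigma\}\ge 2-\varepsilon$ with $\Sigma$ the second entry; since $\varepsilon>0$ we have $2>2-\varepsilon$, so this is equivalent to $\Sigma\ge 2-\varepsilon$, and clearing the denominator $d(x,y)$ and using $d(x,y)-d(u,v)=2D-(d(x,y)+d(u,v))$ turns it into precisely $(ii)$. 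Running the same chain with equalities shows the equalities in $(i)$ and $(ii)$ occur together; in that case the common value $2-\varepsilon$ lies strictly below $2$, so the minimum is attained at $\Sigma$ and $\|m_{xy}+m_{uv}\|=\Sigma=\frac{d(x,v)+d(u,y)+|d(x,y)-d(u,v)|}{\max\{d(x,y),d(u,v)\}}$, which is the asserted value.

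For the upper bound in the formula I would use the regrouping
\[
m_{xy}+m_{uv}=\frac{\delta_x-\delta_v}{d(x,y)}+\frac{\delta_u-\delta_y}{d(x,y)}+\Big(\frac{1}{d(u,v)}-\frac{1}{d(x,y)}\Big)(\delta_u-\delta_v),
\]
in which the last coefficient is $\ge 0$ because $d(x,y)\ge d(u,v)$; the triangle inequality for $\|\cdot\|$ together with $\|\delta_p-\delta_q\|=d(p,q)$ (which follows from $m_{pq}\in S_{\mathcal{F}(M)}$) yields $\|m_{xy}+m_{uv}\|\le\frac{d(x,v)+d(u,y)+(d(x,y)-d(u,v))}{d(x,y)}$, while $\|m_{xy}+m_{uv}\|\le\|m_{xy}\|+\|m_{uv}\|=2$ is immediate.

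For the matching lower bound I would produce an extremal Lipschitz function. On the four points put $g(y)=0$, $g(u)=d(u,y)$, $g(v)=d(u,y)-d(u,v)$ and $g(x)=\min\{d(x,y),\,d(x,v)+d(u,y)-d(u,v)\}$; extend $g$ to $M$ by a McShane extension and subtract its value at $0$ to obtain $f\in B_{\Lip_0(M)}$ with $(m_{xy}+m_{uv})(f)=\frac{g(x)-g(y)}{d(x,y)}+\frac{g(u)-g(v)}{d(u,v)}=\frac{g(x)}{d(x,y)}+1$. If the minimum defining $g(x)$ is the first term this equals $2$ (and then $d(x,y)+d(u,v)\le d(x,v)+d(u,y)$, so $\Sigma\ge 2$); if it is the second term it equals $\Sigma$ (and then $\Sigma\le 2$); either way $(m_{xy}+m_{uv})(f)=\min\{2,\Sigma\}$, which gives the reverse inequality. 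The step that needs care — and, having guessed the right $g$, the main obstacle — is verifying that $g$ is $1$-Lipschitz, i.e.\ $|g(p)-g(q)|\le d(p,q)$ for the six pairs among $x,y,u,v$: the three pairs avoiding $x$ are immediate from the triangle inequality, while for the three pairs containing $x$ one splits according to which term realises the minimum in $g(x)$ and again reduces to the triangle inequality (for instance, when $g(x)=d(x,v)+d(u,y)-d(u,v)$ one has $g(x)-g(v)=d(x,v)$ exactly).
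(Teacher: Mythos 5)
Your proof is correct and takes essentially the same approach as the paper: the same grouping $m_{xy}+m_{uv}=\frac{\delta_x-\delta_v}{d(x,y)}+\frac{\delta_u-\delta_y}{d(x,y)}+\bigl(\frac{1}{d(u,v)}-\frac{1}{d(x,y)}\bigr)(\delta_u-\delta_v)$ for the upper bound, and the same extremal function (the paper defines $f(p)=\min\{d(y,p),\,d(v,p)+d(y,u)-d(u,v)\}+a$ on all of $M$ and cites Weaver's Proposition~1.32 for its Lipschitz constant, whereas you specify $g$ only on $\{x,y,u,v\}$, check the six pairs by hand, and invoke McShane — but the two functions agree on the four relevant points). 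Packaging everything as the closed formula $\|m_{xy}+m_{uv}\|=\min\{2,\Sigma\}$ before reading off $(i)\Leftrightarrow(ii)$ and the equality case is a clean way to organize the same argument.
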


\begin{proof}
Without loss of generality, let us assume that $d(x,y)\ge d(u,v)$.

$(i)\Rightarrow(ii)$. Assume that $\|m_{xy}+m_{uv}\|\ge 2-\varepsilon$. Then
\begin{align*}
    2-\varepsilon&\le \|m_{xy}+m_{uv}\|\\
    &=\frac{\big\|(\delta_x-\delta_v)d(u,v)+(\delta_u-\delta_y)d(u,v)+(\delta_u-\delta_v)\big(d(x,y)-d(u,v)\big)\big\|}{d(x,y)d(u,v)}
    \\
    &\le \frac{d(x,v)+d(u,y)+d(x,y)-d(u,v)}{d(x,y)}.
\end{align*}
Therefore
$$d(x,v)+d(u,y)\ge (1-\varepsilon)d(x,y)+d(u,v)=d(x,y)+d(u,v)-\varepsilon\max\big\{d(x,y),d(u,v)\big\}.$$

$(ii)\Rightarrow(i)$. Assume that 
$$d(x,v)+d(u,y)\ge d(x,y)+d(u,v)-\varepsilon\max\big\{d(x,y),d(u,v)\big\},$$ 
i.e.,
$$d(v,x)+d(y,u)-d(u,v)\ge (1-\varepsilon)d(x,y).$$
Let us examine the function $f\colon M\rightarrow\mathbb{R}$,
$$f(p)=\min\big\{d(y,p),d(v,p)+d(y,u)-d(u,v)\big\}+a,$$
where $a\in \mathbb{R}$ is such that $f(0)=0$. According to {\cite[Proposition~1.32]{Weaver}}, $f\in \Lip_0(M)$ and $\|f\|\le 1$. Let us note that
\begin{align*}
    f(x)&=\min\big\{d(y,x),d(v,x)+d(y,u)-d(u,v)\big\}+a\ge (1-\varepsilon)d(x,y)+a,\\
    f(y)&=\min\big\{0,d(v,y)+d(y,u)-d(u,v)\big\}+a= a,\\
    f(u)&=\min\big\{d(y,u),d(v,u)+d(y,u)-d(u,v)\big\}+a=d(y,u)+a,\\
    f(v)&=\min\big\{d(y,v),0+d(y,u)-d(u,v)\big\}+a=d(y,u)-d(u,v)+a.
\end{align*}
Therefore
$$\|m_{xy}+m_{uv}\|\ge f(m_{xy})+f(m_{uv})\ge 1-\varepsilon+1=2-\varepsilon.$$
Now when the equivalence of $(i)$ and $(ii)$ is proved, it is clear that the equalities in $(i)$ and $(ii)$ hold simultaneously. Assume that the equalities hold both in $(i)$ and $(ii)$. Then
\begin{align*}
    \|m_{xy}+m_{uv}\|&= 2-\varepsilon\\
    &=2-\frac{d(x,y)+d(u,v)-d(x,v)-d(u,y)}{\max\big\{d(x,y),d(u,v)\big\}}\\
    &= \frac{d(x,v)+d(u,y)+|d(x,y)-d(u,v)|}{\max\big\{d(x,y),d(u,v)\big\}}.
\end{align*}
\end{proof}

\section{Characterizations of Daugavet-points in Lipschitz-free spaces}
It has been shown that a Daugavet-point is at distance two from all denting points of the unit ball (see {\cite[Proposition~3.1]{JRZ}}). Furthermore, in Lipschitz-free spaces over compact metric spaces, every unit sphere element that is at distance two from all denting points of the unit ball is a Daugavet-point (see {\cite[Theorem~3.2]{JRZ}}).  The main purpose of this section is to remove the compactness assumption from {\cite[Theorem~3.2]{JRZ}}, i.e., to generalize the result to all metric spaces $M$. 

Let us first introduce some notation for segments and approximate segments between two points (see also \cite{RRZ}). For every $u,v\in M$ and $\delta>0$ let
$$[u,v]:=\big\{p\in M \colon d(u,p)+d(v,p)=d(u,v)\big\}$$
and
$$\seg{u}{v}{\delta}:=\big\{p\in M \colon d(u,p)+d(v,p)<d(u,v)+\delta\big\}.$$

\begin{thm}\label{general_daugavet}
    Let $\mu\in S_{\mathcal{F}(M)}$. The following statements are equivalent:
    \begin{enumerate}[label={(\roman*)}]
        \item $\mu$ is a Daugavet-point;
        \item for every $\nu \in \dent (B_{\mathcal{F}(M)})$ we have $\|\mu-\nu\|=2$;
        \item if for $u, v \in M$ with $u\neq v$, and $r,s>0$ there exists $\delta>0$ such that  $$\seg{u}{v}{\delta}\subseteq B\big(u,rd(u,v)\big)\cup B\big(v,sd(u,v)\big),$$ then $\|\mu-m_{uv}\|\ge 2-2r-2s.$
        
    \end{enumerate}
\end{thm}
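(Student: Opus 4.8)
The plan is to prove the cycle $(i)\Rightarrow(iii)\Rightarrow(ii)\Rightarrow(i)$, using Lemma \ref{molekul_dis} and Lemma \ref{lip_norm} as the main tools, together with the known fact (from \cite{JRZ}) that a Daugavet-point is at distance two from every denting point.

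For $(ii)\Rightarrow(i)$: this is the hard direction and the heart of the theorem, since it is exactly the place where compactness was used in \cite[Theorem~3.2]{JRZ}. Fix a slice $S=S(f,\alpha)$ of $B_{\mathcal{F}(M)}$ with $f\in S_{\Lip_0(M)}$; we must produce $\nu\in S$ with $\|\mu-\nu\|\ge 2-\varepsilon$. Since $B_{\mathcal{F}(M)}=\clconv(\mathcal{M}(M))$, the slice contains some molecule $m_{uv}$, and moreover we may assume $f(m_{uv})>1-\alpha$. The dichotomy I would set up is this: either there are molecules $m_{pq}\in S$ with $d(p,q)$ arbitrarily small — in which case Lemma \ref{molekul_dis} immediately gives $\|\mu-m_{pq}\|\ge 2-\varepsilon$ for $d(p,q)$ small enough, and we are done — or there is a uniform lower bound $\eta>0$ on $d(p,q)$ over all molecules $m_{pq}\in S$. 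In the second case I want to argue that $m_{uv}$ (or a suitable molecule in $S$) is ``close to denting'': the absence of nearby small molecules in a slice through $m_{uv}$, combined with the geometry of $\mathcal{F}(M)$, should force a subslice that is small in diameter, hence $m_{uv}$ is arbitrarily close to a denting point $\nu$, and then $\|\mu-m_{uv}\|\ge\|\mu-\nu\|-\|\nu-m_{uv}\|\ge 2-\varepsilon$. Making this precise is the main obstacle; I expect to need the characterization of denting points of $B_{\mathcal{F}(M)}$ among molecules and a careful slicing argument, possibly passing through condition $(iii)$ rather than treating $(ii)\Rightarrow(i)$ directly.

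For $(i)\Rightarrow(iii)$: suppose $\mu$ is a Daugavet-point, and suppose $u\neq v$, $r,s>0$, and $\delta>0$ are such that $\seg{u}{v}{\delta}\subseteq B(u,rd(u,v))\cup B(v,sd(u,v))$. I want to show $\|\mu-m_{uv}\|\ge 2-2r-2s$. The idea is to choose a Lipschitz function $f$ that is ``almost linear'' along $[u,v]$ and whose slice $S(f,\alpha)$ for small $\alpha$ essentially only picks up molecules $m_{pq}$ with $p,q\in\seg{u}{v}{\delta}$; the prototype is $f=$ (a $1$-Lipschitz extension of) $p\mapsto\frac{1}{d(u,v)}(d(u,\cdot)-d(u,\cdot))$-type function, i.e. something separating $u$ from $v$ optimally. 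Then any $\nu\in S(f,\alpha)$ is within a controlled distance of $\conv\{m_{pq}: p,q\in\seg{u}{v}{\delta}\}$, and by the containment hypothesis each such $p,q$ lies in $B(u,rd(u,v))\cup B(v,sd(u,v))$. Using Lemma \ref{lip_norm} together with the triangle inequality, one estimates $\|m_{uv}-m_{pq}\|$ from above by something like $2-2r-2s+o(1)$ whenever $p,q$ lie in those two small balls in the ``expected'' configuration, and one checks the bad configurations (both $p,q$ near $u$, or both near $v$) make $m_{pq}\approx\pm m_{uv}$ or have small norm and so are excluded by membership in the slice. Since $\mu$ is a Daugavet-point there is $\nu\in S(f,\alpha)$ with $\|\mu-\nu\|\ge 2-\varepsilon$; combining the upper bound on $\|m_{uv}-\nu\|$ with $\|\mu-m_{uv}\|\ge\|\mu-\nu\|-\|\nu-m_{uv}\|$ and letting $\varepsilon,\alpha\to 0$ yields the claim.

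For $(iii)\Rightarrow(ii)$: let $\nu\in\dent(B_{\mathcal{F}(M)})$; denting points of $B_{\mathcal{F}(M)}$ are molecules, so $\nu=m_{uv}$ for some $u\neq v$, and being a denting point means $m_{uv}$ admits slices of arbitrarily small diameter. I would translate the small-diameter-slice condition into precisely the hypothesis appearing in $(iii)$: a slice of small diameter around $m_{uv}$ forces, for every $r,s>0$, the existence of $\delta>0$ with $\seg{u}{v}{\delta}\subseteq B(u,rd(u,v))\cup B(v,sd(u,v))$ — otherwise one could find molecules $m_{pq}$ in every such slice that are bounded away from $m_{uv}$, contradicting small diameter. (This equivalence, or one very close to it, is presumably available from the denting-point analysis in \cite{JRZ} or is a short computation using Lemma \ref{lip_norm}.) Then $(iii)$ applies with $r,s$ arbitrarily small and gives $\|\mu-m_{uv}\|\ge 2-2r-2s$ for all $r,s>0$, hence $\|\mu-m_{uv}\|=2$, which is $(ii)$. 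The routine verifications here are the passage between the metric condition and the small-diameter-slice condition, and I expect these to be essentially bookkeeping with Lemma \ref{lip_norm}.
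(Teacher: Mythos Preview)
Your $(iii)\Rightarrow(ii)$ is fine, but the other two implications in your cycle $(i)\Rightarrow(iii)\Rightarrow(ii)\Rightarrow(i)$ both have real gaps. For $(i)\Rightarrow(iii)$, the ``bad configurations'' are not handled: take $M=\{0,\tfrac14,\tfrac34,1\}\subset\mathbb{R}$, $u=0$, $v=1$, and $r=s$ slightly above $\tfrac14$. The segment hypothesis holds, yet with $p=0$, $q=\tfrac14$ one computes $f_{uv}(m_{pq})=1$, so $m_{pq}$ sits in \emph{every} slice $S(f_{uv},\alpha)$, while $\|m_{uv}-m_{pq}\|=\tfrac32>2r+2s$. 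Such molecules are norm-one, not close to $\pm m_{uv}$, and not excluded from the slice. Since the Daugavet property gives no control over \emph{which} $\nu$ in the slice satisfies $\|\mu-\nu\|\ge 2-\varepsilon$, your triangle-inequality step $\|\mu-m_{uv}\|\ge\|\mu-\nu\|-\|\nu-m_{uv}\|$ fails. For $(ii)\Rightarrow(i)$ you correctly sense that one should pass to something ``close to denting'', but you do not carry this out, and you say as much.

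The paper instead runs $(i)\Rightarrow(ii)\Rightarrow(iii)\Rightarrow(i)$, and the two substantive steps use ideas absent from your proposal. For $(ii)\Rightarrow(iii)$, Lemmas~\ref{subset_Line}--\ref{lemma_dent} construct, from the segment hypothesis on $u,v,r,s$, an \emph{actual denting point} $m_{xy}$ with $x\in B(u,rd(u,v))$ and $y\in B(v,sd(u,v))$ (via an iterated refinement of the approximate segment producing Cauchy sequences, after passing to the completion of $M$); then $\|m_{uv}-m_{xy}\|\le 2(r+s)$ by Lemma~\ref{lip_norm} and $\|\mu-m_{xy}\|=2$ by $(ii)$ give the conclusion. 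For $(iii)\Rightarrow(i)$, the paper uses an iterative subdivision: given $m_{u_0v_0}$ in a slice with $\|\mu-m_{u_0v_0}\|<2-\varepsilon$, the contrapositive of $(iii)$ with $r=s=\varepsilon/4$ yields a point $p$ in $\seg{u_0}{v_0}{\delta d(u_0,v_0)}$ outside both small balls, and one of $m_{u_0p},m_{pv_0}$ stays in (a slightly relaxed) slice with endpoint distance shrunk by a factor about $1-\varepsilon/4$; after finitely many iterations the distance drops below the threshold of Lemma~\ref{molekul_dis}, which finishes the proof.
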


To prove $(ii)\Rightarrow(iii)$ of Theorem \ref{general_daugavet} we introduce three lemmas.

\begin{figure}[!ht]
	\begin{center}
		\begin{tikzpicture}[thick, scale=1.5]
		\draw[dashed] (3,0) ellipse (3cm and 0.5cm);
		\draw[rotate=9, dashed, fill=gray!20] (0.75,-0.05) ellipse (0.65cm and 0.15cm);
		\draw[rotate=-2, dashed, fill=gray!20] (3.5,0.2) ellipse (2.45cm and 0.25cm);
		\node [black] at (0.25,0) {$\bullet$}; 
		\node [black] at (5.75,0) {$\bullet$}; 
		\node [black] at (1.25,0.15) {$\bullet$}; 
		\node [right] at (0.25,0) {$u$};
		\node [left] at (5.75,0) {$v$};
		\node [right] at (1.25,0.15) {$x$}; 
		\end{tikzpicture}
	\end{center}
\caption{Illustration of Lemma \ref{subset_Line}} \label{fig_subset_line}
\end{figure}

\begin{lem}\label{subset_Line}
     Let $u, v\in M$, $\delta>0$ and $x\in \seg{u}{v}{\delta}$. There exists $\delta'>0$ such that 
     $$\seg{u}{x}{\delta'}\cup\seg{v}{x}{\delta'}\subseteq \seg{u}{v}{\delta}.$$
\end{lem}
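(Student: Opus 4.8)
The plan is to extract a quantitative "slack" from the hypothesis $x\in\seg{u}{v}{\delta}$ and feed it through the triangle inequality. Concretely, set
$$\eta:=d(u,v)+\delta-\big(d(u,x)+d(v,x)\big),$$
which is strictly positive precisely because $x\in\seg{u}{v}{\delta}$. I claim that $\delta':=\eta$ does the job (any $0<\delta'\le\eta$ would work as well).

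First I would verify $\seg{u}{x}{\delta'}\subseteq\seg{u}{v}{\delta}$. Take $p\in\seg{u}{x}{\delta'}$, i.e.\ $d(u,p)+d(x,p)<d(u,x)+\delta'$. Bounding $d(v,p)\le d(v,x)+d(x,p)$ by the triangle inequality gives
$$d(u,p)+d(v,p)\le \big(d(u,p)+d(x,p)\big)+d(v,x)<d(u,x)+\delta'+d(v,x)=d(u,v)+\delta,$$
using $d(u,x)+d(v,x)+\delta'=d(u,x)+d(v,x)+\eta=d(u,v)+\delta$ in the last step. Hence $p\in\seg{u}{v}{\delta}$. The inclusion $\seg{v}{x}{\delta'}\subseteq\seg{u}{v}{\delta}$ is obtained the same way, now bounding $d(u,p)\le d(u,x)+d(x,p)$ instead; the symmetric computation again yields $d(u,p)+d(v,p)<d(u,v)+\delta$. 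Taking the union of the two inclusions finishes the proof.

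There is essentially no obstacle here: the only point requiring a little care is keeping the inequalities strict, but since $p$ lies in an \emph{open} approximate segment and the slack $\eta$ is chosen to exactly absorb $\delta'$, the strict inequality propagates correctly. This also matches the picture in Figure \ref{fig_subset_line}, where the two small shaded approximate segments around $x$ sit inside the large approximate segment from $u$ to $v$.
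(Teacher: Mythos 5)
Your proof is correct and follows essentially the same approach as the paper: both extract the slack $\eta = d(u,v)+\delta-(d(u,x)+d(v,x)) > 0$ (the paper picks $\delta'<\eta$, you observe $\delta'=\eta$ works), and both push $p\in\seg{u}{x}{\delta'}$ through the triangle inequality $d(v,p)\le d(v,x)+d(x,p)$ to conclude $d(u,p)+d(v,p)<d(u,v)+\delta$.
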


\begin{proof}
Since $x\in\seg{u}{v}{\delta}$, there exists $\delta'>0$ such that 
$$d(u,x)+d(v,x)+\delta'<d(u,v)+\delta.$$
If $p\in\seg{u}{x}{\delta'}$, then
\begin{align*}
    d(u,p)+d(v,p)&< d(u,x)+\delta'-d(x,p)+d(v,p)\\
    &\le d(u,x)+\delta'+d(v,x)\\
    &< d(u,v)+\delta,
\end{align*}
giving us $\seg{u}{x}{\delta'}\subseteq \seg{u}{v}{\delta}.$
The inclusion $\seg{v}{x}{\delta'}\subseteq \seg{u}{v}{\delta}$ can be proved analogously. For illustration see Figure \ref{fig_subset_line}.
\end{proof}

\begin{lem}\label{emptyset}
     Let $u, v\in M$ and $r,s,\delta>0$ be such that 
     $$\seg{u}{v}{\delta}\subseteq B(u,r)\cup B(v,s).$$
     For every $\varepsilon>0$ there exist $\delta'>0$, $x\in B(u,r) $ and $y\in B(v,s)$ such that the following holds:
     \begin{enumerate}[label={(\arabic*)}]
         \item $d(u,x)+d(v,y)+d(x,y)<d(u,v)+\delta$;\label{lem_cond4}
         \item $\seg{x}{y}{\delta'}\subseteq\seg{u}{v}{\delta}$;\label{lem_cond2}
         \item $d(x,y)\le d(u,v)$;\label{lem_cond3}
         \item $\seg{x}{y}{\delta'}\subseteq B(x,\varepsilon)\cup B(y,\varepsilon)$.\label{lem_cond1}
     \end{enumerate}
\end{lem}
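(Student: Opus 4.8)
The plan is to obtain $x$ and $y$ as a near-minimiser of $d(x,y)$ over a suitable family of pairs, and then to check that such a near-minimiser automatically satisfies \ref{lem_cond4}--\ref{lem_cond1}. Call a pair $(x,y)\in B(u,r)\times B(v,s)$ \emph{admissible} if $d(u,x)+d(v,y)+d(x,y)<d(u,v)+\delta$, and put
$$\ell:=\inf\big\{d(x,y)\colon (x,y)\ \text{admissible}\big\}.$$
Since $(u,v)$ is admissible (the defining inequality reads $d(u,v)<d(u,v)+\delta$), the infimum is over a nonempty set and $0\le\ell\le d(u,v)$.

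First I would fix an admissible pair $(x,y)$ with $d(x,y)<\ell+\varepsilon/3$ and $d(x,y)\le d(u,v)$; this is possible because there are admissible pairs with $d(x,y)$ as close to $\ell$ as we wish, and if $\ell=d(u,v)$ one may simply take $(x,y)=(u,v)$. Then $x\in B(u,r)$, $y\in B(v,s)$, and \ref{lem_cond4} and \ref{lem_cond3} hold by construction. Write $g:=d(u,v)+\delta-\big(d(u,x)+d(v,y)+d(x,y)\big)>0$ and set $\delta':=\min\{\varepsilon/3,\,g/2\}>0$. For any $p\in\seg{x}{y}{\delta'}$ the triangle inequality gives
$$d(u,p)+d(v,p)\le d(u,x)+d(x,p)+d(v,y)+d(y,p)<d(u,x)+d(v,y)+d(x,y)+\delta'\le d(u,v)+\delta-\tfrac{g}{2}<d(u,v)+\delta,$$
so $p\in\seg{u}{v}{\delta}$, which is \ref{lem_cond2}; in particular $\seg{x}{y}{\delta'}\subseteq B(u,r)\cup B(v,s)$ by hypothesis.

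The main point is \ref{lem_cond1}. Suppose it fails: there is $p\in\seg{x}{y}{\delta'}$ with $d(x,p)>\varepsilon$ and $d(y,p)>\varepsilon$. By the previous paragraph $p\in B(u,r)$ or $p\in B(v,s)$. If $p\in B(u,r)$, then $(p,y)$ is admissible, since $p\in B(u,r)$, $y\in B(v,s)$, and (using $d(x,p)+d(y,p)<d(x,y)+\delta'$)
$$d(u,p)+d(v,y)+d(p,y)\le d(u,x)+d(x,p)+d(v,y)+d(p,y)<d(u,x)+d(v,y)+d(x,y)+\delta'<d(u,v)+\delta.$$
Hence $d(p,y)\ge\ell$, and then $d(x,p)<d(x,y)+\delta'-d(p,y)<(\ell+\tfrac{\varepsilon}{3})+\tfrac{\varepsilon}{3}-\ell<\varepsilon$, contradicting $d(x,p)>\varepsilon$. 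The case $p\in B(v,s)$ is symmetric: there $(x,p)$ is admissible, so $d(x,p)\ge\ell$ and $d(y,p)<d(x,y)+\delta'-d(x,p)<\varepsilon$, again a contradiction. This proves \ref{lem_cond1}, and the lemma follows.

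I expect the only genuine choice to be the decision to minimise $d(x,y)$; after that every step is a one-line triangle-inequality estimate. The two things to watch are that the slack $g$ must be spent carefully (hence the $g/2$ in $\delta'$, so that \ref{lem_cond2} and the admissibility of $(p,y)$ and $(x,p)$ all go through with room to spare), and that the boundary case $\ell=d(u,v)$ has to be treated separately so that \ref{lem_cond3} survives.
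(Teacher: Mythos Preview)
Your argument is correct and takes a genuinely different route from the paper's proof. The paper proceeds constructively: it first minimises the radius $r'$ for which $\seg{u}{v}{\delta}\subseteq B(u,r')\cup B(v,s)$, picks $x$ just inside $B(u,r')$ near that boundary (or $x=u$ if $r'\le\varepsilon$), then passes to a smaller approximate segment $\seg{x}{v}{\gamma}\subseteq\seg{u}{v}{\delta}$ and repeats the trick on the $v$-side to get $y$; verifying \ref{lem_cond4}--\ref{lem_cond1} then takes some careful case analysis and bookkeeping with the auxiliary constants $\alpha,\gamma,r',s'$. Your variational approach---take a near-minimiser of $d(x,y)$ among admissible pairs---is slicker: the key condition \ref{lem_cond1} falls out immediately, because a point $p\in\seg{x}{y}{\delta'}$ far from both $x$ and $y$ would yield a new admissible pair $(p,y)$ or $(x,p)$ with strictly smaller distance, contradicting near-minimality. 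What the paper's version buys is a more explicit geometric picture (Figure~\ref{fig_line}) of where $x$ and $y$ sit, which makes the subsequent iterative application in Lemma~\ref{lemma_dent} easier to visualise; your version trades that picture for a shorter argument with fewer moving parts.
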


\begin{figure}[!ht]
	\begin{center}
		\begin{tikzpicture}[thick, scale=1.5]
		\draw[dashed] (3,0) ellipse (3cm and 0.5cm);
		\draw[rotate=-2, dashed] (3.4,0.2) ellipse (2.55cm and 0.3cm);
		\draw[dashed, fill=gray!20] (2.65,0.15) ellipse (1.7cm and 0.16cm);
		\draw (1.25,0) arc (0:45:1cm);
		\draw (1.25,0) arc (0:-45:1cm);
		\draw (1.1,0) arc (0:45:0.85cm);
		\draw (1.1,0) arc (0:-45:0.85cm);
		\draw (4.07,0) arc (180:150:1.68cm);
		\draw (4.07,0) arc (180:210:1.68cm);
		\draw (4.22,0) arc (180:150:1.53cm);
		\draw (4.22,0) arc (180:210:1.53cm);
		\node [black] at (0.25,0) {$\bullet$}; 
		\node [black] at (5.75,0) {$\bullet$}; 
		\node [black] at (1.15,0.15) {$\bullet$}; 
		\node [black] at (4.15,0.15) {$\bullet$}; 
		\node [right] at (0.25,0) {$u$};
		\node [left] at (5.75,0) {$v$};
		\node [below left] at (1.15,0.15) {$x$}; 
		\node [below right] at (4.15,0.15) {$y$}; 
		\node [right] at (1,0.7) {$B(v,r')$}; 
		\node [left] at (4.25,0.7) {$B(u,s')$}; 
		\node [left] at (0.95,0.6) {$B(v,r'-\alpha)$}; 
		\node [right] at (4.3,0.6) {$B(u,s'-\beta)$}; 
		\end{tikzpicture}
	\end{center}
\caption{Illustration of Lemma \ref{emptyset}} \label{fig_line}
\end{figure}

\begin{proof}
Fix $\varepsilon>0$. We may assume that $\delta$ is small enough that $\delta <\varepsilon$. Let $\alpha>0$ be such that $\delta +\alpha<\varepsilon$.
Set
$$r':=\min \big\{t\ge0\colon \seg{u}{v}{\delta}\subseteq B(u,t)\cup B(v,s)\big\}.$$
Clearly $0\le r'\le r$. If $r'\le\varepsilon$, then take $x=u$, otherwise choose
$$x\in\seg{u}{v}{\delta}\setminus \big(B(u,r'-\alpha)\cup B(v,s)\big)$$
(see Figure \ref{fig_line}). Note that $x\in B(u,r')\subseteq B(u,r)$. According to Lemma \ref{subset_Line} there exists $\gamma>0$ such that 
$$\seg{x}{v}{\gamma}\subseteq \seg{u}{v}{\delta}.$$
We may assume that $\gamma$ is small enough to satisfy  $$d(u,x)+d(x,v)+\gamma<d(u,v)+\delta.$$
Then
$$\seg{x}{v}{\gamma}\subseteq \seg{u}{v}{\delta}\subseteq B(u,r')\cup B(v,s).$$
Set
$$s'=\min \big\{t\ge0\colon \seg{x}{v}{\gamma}\subseteq B(u,r')\cup B(v,t)\big\}.$$
Clearly $0\le s'\le s$. If $s'\le\varepsilon$, then take $y=v$, otherwise choose
$$y\in \seg{x}{v}{\gamma}\setminus \big(B(u,r')\cup B(v,s'-\alpha)\big)$$
(see Figure \ref{fig_line}). Note that $y\in B(v,s')\subseteq B(v,s)$. The condition \ref{lem_cond4} holds since
$$d(u,x)+d(v,y)+d(x,y)<d(u,x)+d(x,v)+\gamma<d(u,v)+\delta.$$
According to Lemma \ref{subset_Line} there exists $\delta'>0$ such that
$$\seg{x}{y}{\delta'}\subseteq \seg{x}{v}{\gamma}.$$
Additionally we may assume that $\delta'$ is small enough to satisfy 
$$d(u,x)+d(v,y)+d(x,y)+\delta'<d(u,v)+\delta.$$
Then \ref{lem_cond2} holds since $\seg{x}{v}{\gamma}\subseteq \seg{u}{v}{\delta}$.
If $r'>\varepsilon$, then 
$$d(u,x)>r'-\alpha>\varepsilon-\alpha>\delta $$
and therefore 
$$d(x,y)< d(u,v)+\delta-d(u,x)-d(v,y)< d(u,v).$$
Analogously, $d(x,y)<d(u,v)$, if $s'>\varepsilon$. On the other hand, if $r'\le\varepsilon$ and $s'\le\varepsilon$, then $x=u$ and $y=v$, hence $d(x,y)\le d(u,v)$. Thus we have \ref{lem_cond3}. 

In order to show \ref{lem_cond1}, let $p\in \seg{x}{y}{\delta'}$. Then
$$p\in \seg{x}{y}{\delta'}\subseteq \seg{x}{v}{\gamma}\subseteq B(u,r')\cup B(v,s').$$
Assume that $p\in B(u,r')$ (the case $p\in B(v,s')$ is analogous). If $r'\le\varepsilon$, then $x=u$ and $p\in B(u,r')\subseteq B(x,\varepsilon)$. Otherwise $d(u,x)>r'-\alpha$ and then
\begin{align*}
    d(x,p)&< d(x,y)+\delta'-d(y,p)\\
    &<d(u,v)+\delta-d(u,x)-d(v,y)-d(y,p)\\
    &\le \delta-d(u,x)+d(u,p)\\
    &\le\delta-d(u,x)+r'\\
    &<\delta +\alpha\\
    &< \varepsilon.
\end{align*}
This gives us $p\in B(x,\varepsilon)\cup B(y,\varepsilon)$ and therefore \ref{lem_cond1} holds.
\end{proof}

\begin{lem}\label{lemma_dent}
Let $M$ be complete, and let $u, v\in M$ and $r,s,\delta>0$ with $r+s<d(u,v)$ be such that
$$\seg{u}{v}{\delta}\subseteq B(u,r)\cup B(v,s).$$
Then there exist $x\in B(u,r)$ and $y\in B(v,s)$ such that $m_{xy}$ is a denting point.
\end{lem}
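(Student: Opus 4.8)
The plan is to build the points $x$ and $y$ as limits of iterated applications of Lemma \ref{emptyset}, so that the resulting molecule $m_{xy}$ satisfies a strong "thinness" property: for every $\eta>0$ there is a segment neighbourhood $\seg{x}{y}{\delta'}$ contained in $B(x,\eta)\cup B(y,\eta)$. I expect this property, together with completeness of $M$, to force $m_{xy}$ to be a denting point via a standard slice argument (every molecule whose approximate segments shrink to the two endpoints is strongly exposed, hence denting — this is essentially the kind of computation underlying \cite[Theorem~2.6]{JRZ} and Lemma \ref{molekul_dis}).

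\emph{Step 1: an iteration scheme.} Starting from $u_0=u$, $v_0=v$, $r_0=r$, $s_0=s$, $\delta_0=\delta$ and a sequence $\varepsilon_n\downarrow 0$ (say $\varepsilon_n=2^{-n}$), apply Lemma \ref{emptyset} repeatedly: given $u_n,v_n$ with $\seg{u_n}{v_n}{\delta_n}\subseteq B(u_n,\text{(something)})\cup B(v_n,\text{(something)})$, the lemma produces $\delta_{n+1}>0$ and points $u_{n+1}\in B(u_n,\varepsilon_n)$, $v_{n+1}\in B(v_n,\varepsilon_n)$ with
$$\seg{u_{n+1}}{v_{n+1}}{\delta_{n+1}}\subseteq\seg{u_n}{v_n}{\delta_n},\qquad \seg{u_{n+1}}{v_{n+1}}{\delta_{n+1}}\subseteq B(u_{n+1},\varepsilon_{n})\cup B(v_{n+1},\varepsilon_{n}),$$
and $d(u_{n+1},v_{n+1})\le d(u_n,v_n)$. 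One has to be slightly careful to feed Lemma \ref{emptyset} a hypothesis of the form $\seg{u_n}{v_n}{\delta_n}\subseteq B(u_n,\rho_n)\cup B(v_n,\sigma_n)$ at each stage; at stage $0$ this is the hypothesis of the present lemma, and at stage $n\ge 1$ it is exactly condition \ref{lem_cond1} from the previous application (with $\rho_n=\sigma_n=\varepsilon_{n-1}$). Summability of $\sum\varepsilon_n$ makes $(u_n)$ and $(v_n)$ Cauchy, so by completeness of $M$ they converge to points $x$ and $y$ respectively, with $d(u,x)\le\sum_{n\ge 0}\varepsilon_n$ and similarly for $y$; choosing the $\varepsilon_n$ small enough (e.g.\ $\sum\varepsilon_n<\min\{r-r_\ast,\dots\}$, or simply rescaling $\varepsilon_n$ at the start) guarantees $x\in B(u,r)$ and $y\in B(v,s)$. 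Also $x\ne y$ because $d(u_n,v_n)$ is bounded below: indeed $d(u_n,v_n)\ge d(u,v)-2\sum\varepsilon_k$, which is positive once the $\varepsilon_k$ are small, using $r+s<d(u,v)$ to leave room.

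\emph{Step 2: $m_{xy}$ is denting.} Fix $\eta>0$ and pick $n$ with, say, $3\varepsilon_{n}<\eta$ and $d(u_{n+1},v_{n+1})\ge c$ for a fixed positive constant $c$. I claim $\seg{x}{y}{\delta''}\subseteq B(x,\eta)\cup B(y,\eta)$ for a suitable $\delta''>0$. This follows from the inclusion $\seg{x}{y}{\delta''}\subseteq\seg{u_{n+1}}{v_{n+1}}{\delta_{n+1}}$ (valid for small $\delta''$ by an argument like Lemma \ref{subset_Line}, since $x\in\seg{u_{n+1}}{v_{n+1}}{\delta_{n+1}}$ once $d(u_{n+1},x)+d(v_{n+1},y)<d(u_{n+1},v_{n+1})$, which holds because the left side is $\le 2\sum_{k>n}\varepsilon_k$ while the right side is $\ge c$ — again using the room from $r+s<d(u,v)$), combined with $\seg{u_{n+1}}{v_{n+1}}{\delta_{n+1}}\subseteq B(u_{n+1},\varepsilon_n)\cup B(v_{n+1},\varepsilon_n)\subseteq B(x,\varepsilon_n+\sum_{k>n}\varepsilon_k)\cup B(y,\varepsilon_n+\sum_{k>n}\varepsilon_k)\subseteq B(x,\eta)\cup B(y,\eta)$. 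Having established that the approximate segments of $x,y$ collapse to $\{x\}\cup\{y\}$, the denting point conclusion is obtained by exhibiting, for each $\eta$, a slice $S(f)$ of $B_{\mathcal F(M)}$ containing $m_{xy}$ of diameter $<$ (a function of $\eta$ tending to $0$). The natural functional is a McShane-type extension like the one used in the proof of Lemma \ref{lip_norm}, e.g.\ $f(p)=\min\{d(y,p),\,d(x,y)\}$ normalized so that $f(0)=0$: then $f(m_{xy})=1$, while any molecule $m_{pq}$ with $f(m_{pq})$ close to $1$ is forced to have $p,q$ close to the segment $[x,y]$, hence (by the collapse property) close to $\{x,y\}$, and then Lemma \ref{lip_norm} or a direct estimate shows $\|m_{pq}-m_{xy}\|$ is small. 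Since $B_{\mathcal F(M)}=\clconv\mathcal M(M)$, controlling the molecules in a slice controls the slice diameter, giving arbitrarily small slices through $m_{xy}$, i.e.\ $m_{xy}\in\dent(B_{\mathcal F(M)})$.

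\emph{Main obstacle.} The delicate point is the bookkeeping of radii across the iteration: one must verify that at every stage the hypothesis needed to invoke Lemma \ref{emptyset} is genuinely available (this is where condition \ref{lem_cond1} of the previous step is used as the hypothesis of the next), and that the accumulated perturbations $\sum\varepsilon_n$ stay within the budget $r$ and $s$ while $d(u_n,v_n)$ stays bounded away from $0$ — the latter is exactly why the assumption $r+s<d(u,v)$ is present and cannot be dropped. The second genuinely substantive step is the passage from "approximate segments collapse to the endpoints" to "denting point"; depending on how much machinery the paper already has available, this may either be quotable from \cite{JRZ} (it is close in spirit to \cite[Theorem~2.6]{JRZ}) or require the explicit slice-and-extension computation sketched above.
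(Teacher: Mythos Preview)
Your approach coincides with the paper's: iterate Lemma~\ref{emptyset} to build Cauchy sequences $(x_n),(y_n)$ with limits $x,y$, verify that for every $\eta>0$ some $\seg{x}{y}{\gamma}$ lies in $B(x,\eta)\cup B(y,\eta)$, and conclude that $m_{xy}$ is denting. Two points deserve tightening.

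First, the tolerances $\varepsilon_n$ cannot be fixed a priori as $2^{-n}$: to embed $\seg{x}{y}{\gamma}$ back into $\seg{x_n}{y_n}{\delta_n}$ in Step~2 you need the tail $\sum_{k>n}\varepsilon_k$ together with $\gamma$ to be small relative to $\delta_n$, and $\delta_n$ is an \emph{output} of Lemma~\ref{emptyset} that may be arbitrarily small. The paper chooses $\varepsilon_{n+1}<\delta_n/6$ adaptively, after which a direct triangle-inequality computation gives $\seg{x}{y}{\varepsilon_{n+1}}\subseteq\seg{x_n}{y_n}{\delta_n}$. Relatedly, $x\in B(u,r)$ is not obtained by bounding $\sum\varepsilon_n$ (already $d(u,x_1)$ may equal $r$); instead the paper observes that each $x_n$ lies in $\seg{u}{v}{\delta}\subseteq B(u,r)\cup B(v,s)$ and cannot lie in $B(v,s)$ by a distance count using $r+s+2\varepsilon_1<d(u,v)$.

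Second, the passage ``approximate segments collapse $\Rightarrow$ denting'' is indeed quotable, but from \cite[Theorem~2.6]{GPPR} rather than \cite{JRZ}; the paper simply invokes it. Your fallback functional $f(p)=\min\{d(y,p),d(x,y)\}$ would not work: any $p$ with $d(y,p)=d(x,y)$ gives $f(m_{py})=1$, so the slice $S(f,\alpha)$ contains molecules far from $m_{xy}$ and has diameter $2$ in general. The peaking functional that does the job is the one from \cite[Lemma~3.6]{GPR}, and the full slice estimate is exactly what \cite[Theorem~2.6]{GPPR} packages.
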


\begin{proof}
We shall find suitable $x$ and $y$ as limits of two convergent sequences $(x_n)$ and $(y_n)$ of elements in the sets 
$$B(u,r)\cap \seg{u}{v}{\delta}\quad\text{
and}\quad
B(v,s)\cap \seg{u}{v}{\delta},$$
respectively. We will construct these sequences inductively and in the process we also define two null sequences $(\delta_n)$ and $(\varepsilon_n)$ of positive numbers. Let $\varepsilon_1>0$ be such that 
$r+s+2\varepsilon_1<d(u,v)$,
and by applying Lemma \ref{emptyset} (for $u=u$, $v=v$, $r=r$, $s=s$, $\delta=\delta$ and $\varepsilon=\varepsilon_1$) we obtain $x_1$ ($=x$), $y_1$ ($=y$) and $\delta_1$ ($=\delta'$). We may additionally assume that $\delta_{1}<\varepsilon_1$.

Suppose that we have found $x_n$, $y_n$, $\delta_n$ and $\varepsilon_n$ for $n\in \mathbb{N}$. 
Let $\varepsilon_{n+1}\in(0,\delta_n/6)$. By applying Lemma \ref{emptyset} (for $u=x_n$, $v=y_n$, $r=\varepsilon_n$, $s=\varepsilon_n$, $\delta=\varepsilon_{n+1}$ and $\varepsilon=\varepsilon_{n+1}$) we obtain $x_{n+1}$ ($=x$), $y_{n+1}$  ($=y$) and $\delta_{n+1}$ ($=\delta'$). We may additionally assume that $\delta_{n+1}<\varepsilon_{n+1}$.

Clearly for every $n\in\mathbb{N}$ we have
\begin{enumerate}[label={(\arabic*)}]
    \item $d(x_n,x_{n+1})+d(y_n,y_{n+1})+d(x_{n+1},y_{n+1})<d(x_n,y_n)+\varepsilon_{n+1}$;\label{dent_cond4}
    \item $\seg{x_n}{y_n}{\delta_n}\subseteq\seg{u}{v}{\delta}$;\label{dent_cond2}
    \item $d(x_{n+1},y_{n+1})\le d(x_{n},y_{n})$;\label{dent_cond3}
    \item $\seg{x_n}{y_n}{\delta_n}\subseteq B(x_n,\varepsilon_n)\cup B(y_n,\varepsilon_n)$.\label{dent_cond1}
\end{enumerate}
Furthermore, $\varepsilon_{n+1}<\delta_{n}/6<\varepsilon_{n}/6$ for every $n\in\mathbb{N}$. Then  for every $m,n\in\mathbb{N}$ with $m>n$ we have 
$$d(x_n,x_m)\le\sum_{i=n}^{m-1}d(x_i,x_{i+1})\le\sum_{i=n}^{m-1}\varepsilon_i<2\varepsilon_n\rightarrow0$$
and therefore the sequence $(x_n)$ converges to some element  $x\in M$. Analogously $(y_n)$ converges to some element  $y\in M$.
For every $n\in \mathbb{N}$ we get
$$d(v,x_n)\ge d(u,v)-d(u,x_1)-d(x_1,x_n) > d(u,v)-r-2\varepsilon_1>s,$$
thus $x_n\notin B(v,s)$. 
Therefore, from
$$x_n\in\seg{x_n}{y_n}{\delta_n}\subseteq\seg{u}{v}{\delta}\subseteq B(u,r)\cup B(v,s)$$
we get $x_n\in B(u,r)$, which yields $x\in B(u,r)$.
Analogously 
$y\in B(v,s).$
Also $x\neq y$, since $r+s<d(u,v)$.

By {\cite[Theorem~2.6]{GPPR}} we see that $m_{xy}$ is a denting point if and only if for every $\varepsilon>0$ there exists $\gamma>0$ such that
$$\seg{x}{y}{\gamma}\subseteq B(x,\varepsilon)\cup B(y,\varepsilon).$$
Fix $\varepsilon>0$. Let $n\in\mathbb{N}$ be such that $d(x_n,x)<\varepsilon/2$,  $d(y_n,y)<\varepsilon/2$ and $\varepsilon_n<\varepsilon/2$. By \ref{dent_cond4} and \ref{dent_cond3} we have
\begin{align*}
    d(x_n,x_m)+d(y_n,y_m)
    &\le d(x_n,x_{n+1})+d(y_n,y_{n+1})+d(x_{n+1},x_{m})+d(y_{n+1},y_{m}) \\
    &<d(x_{n},y_{n})+\varepsilon_{n+1}-d(x_{n+1},y_{n+1})+2\varepsilon_{n+1}+2\varepsilon_{n+1}\\
    &\le d(x_n,y_n)+5\varepsilon_{n+1}-d(x,y)
\end{align*}
for every $m>n$, which gives 
$$d(x_n,x)+d(y_n,y)\le d(x_n,y_n)+5\varepsilon_{n+1}-d(x,y).$$
If $p\in \seg{x}{y}{\varepsilon_{n+1}}$, then
\begin{align*}
    d(x_n,p)+d(y_n,p)&\le d(x_n,x)+d(y_n,y)
    +d(x,p)+d(y,p)\\
    &<d(x_n,y_n)+5\varepsilon_{n+1}-d(x,y)+d(x,y)+\varepsilon_{n+1}\\
    &< d(x_n,y_n)+\delta_n.
\end{align*}
Therefore
$$\seg{x}{y}{\varepsilon_{n+1}}\subseteq \seg{x_n}{y_n}{\delta_n}.$$
Furthermore, $B(x_n,\varepsilon_n)\subseteq B(x,\varepsilon)$ since 
$$d(x_n,x)+\varepsilon_n<\frac{\varepsilon}{2}+\frac{\varepsilon}{2}=\varepsilon.$$
Analogously $B(y_n,\varepsilon_n)\subseteq B(y,\varepsilon)$ and then 
$$\seg{x}{y}{\varepsilon_{n+1}}\subseteq \seg{x_n}{y_n}{\delta_n}\subseteq B(x_n,\varepsilon_n)\cup B(y_n,\varepsilon_n)\subseteq B(x,\varepsilon)\cup B(y,\varepsilon).$$
According to {\cite[Theorem~2.6]{GPPR}}, $m_{xy}$ is a denting point.
\end{proof}

Now we are ready to prove Theorem \ref{general_daugavet}.

\begin{proof}[Proof of Theorem \ref{general_daugavet}.]
$(i)\Rightarrow(ii)$ is {\cite[Proposition~3.1]{JRZ}}.

$(ii)\Rightarrow(iii).$ Assume that $\|\mu-\nu\|=2$ for every $\nu \in \dent (B_{\mathcal{F}(M)})$. First we will prove $(ii)\Rightarrow(iii)$ providing that $M$ is complete.

Note that the case $r+s\ge1$ is trivial. Fix $u, v\in M$ with $u\neq v$ and $r,s>0$ with $r+s<1$, such that there exists $\delta>0$ with $$\seg{u}{v}{\delta}\subseteq B\big(u,rd(u,v)\big)\cup B\big(v,sd(u,v)\big).$$
According to Lemma \ref{lemma_dent} there exist $x\in B\big(u,rd(u,v)\big)$ and $y\in B\big(v,sd(u,v)\big)$ such that $m_{xy}$ is a denting point. By $(ii)$ we have $\|\mu-m_{xy}\|=2$. We know that
$$d(u,x)+d(v,y)\le rd(u,v)+sd(u,v)<d(u,v).$$
Then equalities must hold for some $\varepsilon>0$ in Lemma \ref{lip_norm} and we get
%\begin{align*}
 %   d(u,x)+d(v,y)&\le 2rd(u,v)-d(u,x)+2sd(u,v)-d(v,y)\\
  %  &\le 2(r+s)\max\big\{d(u,v),d(x,y)\big\}-d(u,x)-d(v,y)\\
   % &=2\max\big\{d(u,v),d(x,y)\big\}-d(u,x)-d(v,y)\\
    %&\qquad-2(1-r-s)\max\big\{d(u,v),d(x,y)\big\}\\
    %&\le d(u,v)+d(x,y)-2(1-r-s)\max\big\{d(u,v),d(x,y)\big\}
%\end{align*}
%
\begin{align*}
    \|m_{xy}-m_{uv}\|&=\frac{d(u,x)+d(v,y)+|d(u,v)-d(x,y)|}{\max\big\{d(u,v),d(x,y)\big\}}\\
    &\le \frac{2d(u,x)+2d(v,y)}{\max\big\{d(u,v),d(x,y)\big\}}\\
    &\le \frac{2rd(u,v)+2sd(u,v)}{\max\big\{d(u,v),d(x,y)\big\}}\\
    &\le 2(r+s).
\end{align*}
%and by Lemma \ref{lip_norm} we get
%$$\|m_{xy}-m_{uv}\|\le 2-2(1-r-s)=2r+2s.$$
Consequently,
$$\|\mu-m_{uv}\|\ge \|\mu-m_{xy}\|-\|m_{xy}-m_{uv}\|\ge 2-2r-2s.$$
This closes the case when $M$ is complete.

Now we assume that $M$ is any metric space and let $M'$ be its completion. Then $\mathcal{F}(M)=\mathcal{F}(M')$ and therefore for every $\nu \in \dent (B_{\mathcal{F}(M')})$ we have $\|\mu-\nu\|=2$.

Let us note that, if for $u, v\in M$ with $u\neq v$ and $s,r>0$ there exists $\delta>0$ such that  in $M$ we have
$$\seg{u}{v}{\delta}\subseteq B\big(u,rd(u,v)\big)\cup B\big(v,sd(u,v)\big),$$
then in $M'$ we have the same inclusion
and by the first case we get $\|\mu-m_{uv}\|\ge 2-2r-2s$.

$(iii)\Rightarrow(i)$. Assume that $(iii)$ holds. We will show that $\mu$ is a Daugavet-point. Fix $\varepsilon>0$ and a slice $S(f,\alpha)$ of $ B_{\mathcal{F}(M)}$. We will prove that there exist $u, v\in M$ with $u\neq v$ such that $m_{uv}\in S(f,\alpha)$ and $\|\mu-m_{uv}\|\ge 2-\varepsilon$.

Let $u_0\neq v_0\in M$ be such that $f(u_0)-f(v_0)>(1-\alpha)d(u_0,v_0)$. According to Lemma \ref{molekul_dis} there exists $\gamma>0$ such that if $x\neq y\in M$ satisfy $d(x,y)<\gamma$, then $\|\mu-m_{xy}\|\ge 2-\varepsilon$. Let $n\in \mathbb{N}$ and $\delta>0$ be such that 
$$\Big(1-\frac{\varepsilon}{4}+\delta\Big)^n d(u_0,v_0)<\gamma$$
and  $f(u_0)-f(v_0)>(1-\alpha)(1+\delta)^{n}d(u_0,v_0)$. 

If $\|\mu-m_{u_0v_0}\|\ge 2-\varepsilon$, then we have found suitable points $u$ and $v$.

Consider the case where $\|\mu-m_{u_0v_0}\|< 2-\varepsilon$.
By $(iii)$ there exists 
$$p\in \seg{u_0}{v_0}{\delta d(u_0,v_0)}\setminus\Big( B\big(u_0,\frac{\varepsilon}{4}d(u_0,v_0)\big)\cup B\big(v_0,\frac{\varepsilon}{4}d(u_0,v_0)\big)\Big).$$
Therefore
\begin{align*}
    f(u_0)-f(p)+f(p)-f(v_0)&>(1-\alpha)(1+\delta)^{n}d(u_0,v_0)\\
    &>(1-\alpha)(1+\delta)^{n-1}\big(d(u_0,p)+d(v_0,p)\big).
\end{align*}
Then either $$f(u_0)-f(p)>(1-\alpha)(1+\delta)^{n-1}d(u_0,p)$$
or 
$$f(p)-f(v_0)>(1-\alpha)(1+\delta)^{n-1}d(v_0,p).$$
Additionally we have
$$d(u_0,p)<(1+\delta)d(u_0,v_0)-d(v_0,p)< \Big(1-\frac{\varepsilon}{4}+\delta\Big)d(u_0,v_0).$$
Analogously  $d(v_0,p)< \big(1-\varepsilon/4+\delta\big)d(u_0,v_0).$
Therefore there exist $u_1, v_1\in M$ such that
$$f(u_1)-f(v_1)>(1-\alpha)(1+\delta)^{n-1}d(u_1,v_1)$$
and $d(u_1,v_1)<(1-\varepsilon/4+\delta)d(u_0,v_0)$. 

Now we will repeat this step as many times as needed, but no more than $n$ times. Assume for  $k\in \{1,\ldots,n-1\}$ that
$$f(u_k)-f(v_k)>(1-\alpha)(1+\delta)^{n-k}d(u_k,v_k)$$
and $d(u_k,v_k)<(1-\varepsilon/4+\delta)^kd(u_0,v_0)$. 

If $\|\mu-m_{u_kv_k}\|\ge 2-\varepsilon$,
then we may choose $u_k$ and $v_k$ as points $u$ and $v$ we were looking for. 

If $\|\mu-m_{u_kv_k}\|< 2-\varepsilon$,
then as we did before, we can find $u_{k+1}, v_{k+1}\in M$ such that
$$f(u_{k+1})-f(v_{k+1})>(1-\alpha)(1+\delta)^{n-k-1}d(u_{k+1},v_{k+1})$$
and $d(u_{k+1},v_{k+1})<(1-\varepsilon/4+\delta)^{k+1}d(u_0,v_0)$.

If by the n-th step we have not found suitable points, then we have
$f(u_n)-f(v_n)>(1-\alpha)d(u_n,v_n)$ and 
$$d(u_n,v_n)<\Big(1-\frac{\varepsilon}{4}+\delta\Big)^n d(u_0,v_0)<\gamma,$$
which gives us $\|\mu-m_{u_nv_n}\|\ge 2-\varepsilon$. Now we have found suitable points $u$ and $v$,  therefore $\mu$ is a Daugavet-point.
\end{proof}

In the case, where $\mu$ is a molecule we can use  Lemma \ref{lip_norm} to simplify condition $(iii)$ of Theorem \ref{general_daugavet}.
\begin{cor}\label{mol_daugavet}
    Let $x, y\in M$ be such that $x\neq y$. The following statements are equivalent:
    \begin{enumerate}[label={(\roman*)}]
        \item $m_{xy}$ is a Daugavet-point;
        \item If for $u, v \in M$ with $u\neq v$, and $r,s>0$ there exists $\delta>0$ such that  $$\seg{u}{v}{\delta}\subseteq B\big(u,rd(u,v)\big)\cup B\big(v,sd(u,v)\big),$$  then $$d(x,u)+d(y,v)\ge d(x,y)+d(u,v)-2(r+s)\max\big\{d(x,y),d(u,v)\big\}.$$
        
    \end{enumerate}
\end{cor}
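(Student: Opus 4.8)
The plan is to obtain this as a direct specialization of the equivalence $(i)\Leftrightarrow(iii)$ in Theorem \ref{general_daugavet} to $\mu=m_{xy}$, combined with the norm identity from Lemma \ref{lip_norm}. Since $m_{xy}\in S_{\mathcal F(M)}$, Theorem \ref{general_daugavet} applies verbatim with $\mu=m_{xy}$, and its statement $(i)$ is statement $(i)$ of the corollary. So the whole task reduces to showing that, for $\mu=m_{xy}$, statement $(iii)$ of Theorem \ref{general_daugavet} is just a reformulation of statement $(ii)$ of the corollary.

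To carry this out I would fix $u,v\in M$ with $u\neq v$ and $r,s>0$ for which some $\delta>0$ gives $\seg{u}{v}{\delta}\subseteq B(u,rd(u,v))\cup B(v,sd(u,v))$, and compare the two conclusions attached to this hypothesis. Theorem \ref{general_daugavet}$(iii)$ demands $\|m_{xy}-m_{uv}\|\ge 2-2r-2s$, while statement $(ii)$ of the corollary demands $d(x,u)+d(y,v)\ge d(x,y)+d(u,v)-2(r+s)\max\{d(x,y),d(u,v)\}$. Now $m_{uv}=-m_{vu}$, so $\|m_{xy}-m_{uv}\|=\|m_{xy}+m_{vu}\|$, and applying Lemma \ref{lip_norm} to the pair of molecules $m_{xy}$ and $m_{vu}$ (i.e.\ with the roles of $u$ and $v$ interchanged) with $\varepsilon:=2(r+s)>0$ shows that $\|m_{xy}+m_{vu}\|\ge 2-2(r+s)$ holds if and only if $d(x,u)+d(y,v)\ge d(x,y)+d(u,v)-2(r+s)\max\{d(x,y),d(u,v)\}$, after using $d(u,v)=d(v,u)$. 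Hence the two conclusions are equivalent for every admissible choice of $u,v,r,s$, so the implications they appear in are literally the same statement, and $(i)\Leftrightarrow(ii)$ of the corollary follows.

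I do not expect any real obstacle here: the content is already packaged in Theorem \ref{general_daugavet} and Lemma \ref{lip_norm}, and what remains is only the sign bookkeeping $m_{uv}=-m_{vu}$ (which turns the difference of molecules into the sum for which Lemma \ref{lip_norm} is phrased) together with a careful relabeling of $u$ and $v$ when invoking that lemma, so that the distance terms come out as $d(x,u)+d(y,v)$ and the normalizing factor as $\max\{d(x,y),d(u,v)\}$. No completeness or compactness hypothesis on $M$ is needed at this stage, since those considerations were already absorbed into the proof of Theorem \ref{general_daugavet}.
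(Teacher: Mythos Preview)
Your proposal is correct and follows exactly the approach the paper indicates: the corollary is stated without a separate proof, the paper merely remarking that Lemma~\ref{lip_norm} simplifies condition~$(iii)$ of Theorem~\ref{general_daugavet} when $\mu$ is a molecule. Your write-up makes this precise, including the sign bookkeeping $m_{uv}=-m_{vu}$ and the relabeling needed so that Lemma~\ref{lip_norm} outputs $d(x,u)+d(y,v)$; nothing is missing.
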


\section{Lipschitz-free space with Radon--Nikod\'ym property and with Daugavet-point}
In this section, we construct a Lipschitz-free space with the Radon--Nikod\'ym property and a Daugavet-point. Recall that a Banach space $X$ has the \emph{Radon--Nikod\'ym property} if every nonempty bounded closed convex set is the closed convex hull of its denting points. The Radon--Nikod\'ym property and the Daugavet property can be considered the opposite -- a Banach space with the Radon--Nikod\'ym property has slices of the unit ball with arbitrary small diameter, the Daugavet property implies that all slices of the unit ball have diameter 2. Therefore it was somewhat surprising to find such an example.

Let us notice that if a Banach space $X$ has the Radon--Nikod\'ym property, then $x\in S_X$ is a Daugavet-point if and only if all denting points of the unit ball are at distance 2 from $x$. Sufficiency is already proved by {\cite[Proposition~3.1]{JRZ}}. Also, if $X$ has the Radon--Nikod\'ym property, then $B_X=\clconv\big(\dent (B_X)\big)$, i.e., every slice of the unit ball contains a denting point of the unit ball  and therefore if all denting points of the unit ball are at distance 2 from $x$, then clearly $x$ is a Daugavet-point. This implies that Lipschitz-free spaces are a good candidate for finding said example.

Before presenting the example, we recall that for Lipschitz-free spaces the Radon--Nikod\'ym property is equivalent to the Schur property (see {\cite[Theorem~4.6]{AGPP}}). We also remind that the Lipschitz-free space over a countable complete metric space has the Schur property (see {\cite[Corollary~2.7]{ANPP}}). Therefore, our aim is to find a  countable complete metric space $M$ such that $\mathcal{F}(M)$ has a Daugavet-point.

\begin{eks}\label{RNP_Daug_ex}
Let $x:=(0,0)$, $y:=(1,0)$ and $S_0:=\{x,y\}$. For every $n\in \mathbb{N}$ let
$$S_{n}:=\Big\{\Big(\frac{k}{2^n},\frac{1}{2^{n}}\Big)\colon k\in\{0,1,\ldots,2^n\}\Big\}$$
(see Figure \ref{fig_RNP}).
Consider
$$M:=\bigcup_{n=0}^\infty S_n$$
with the metric
$$d\big((a_1,b_1),(a_2,b_2)\big):=\begin{cases} 
    |a_1-a_2|, & \text{if }b_1=b_2,\\ 
    \min\{a_1+a_2,2-a_1-a_2\}+|b_1-b_2|, &\text{if } b_1\neq b_2. \end{cases}$$
    
\begin{figure}[ht!]
	\begin{center}
		\begin{tikzpicture}
		%Grid
		\draw[step=1cm,gray,very thin] (-0.1,-0.1) grid (8.1,4.1);
		\draw[very thin] (0,4) -- (8,4);
		\draw[very thin] (0,2) -- (8,2);
		\draw[very thin] (0,1) -- (8,1);
		\draw[very thin] (0,0.5) -- (8,0.5);
		\draw[very thin] (0,0) -- (0,4);
		\draw[very thin] (8,0) -- (8,4);
		\node [below] at (0,0) {$0$};
		\node [below] at (2,0) {$\frac{1}{4}$};
		\node [below] at (4,0) {$\frac{1}{2}$};
		\node [below] at (6,0) {$\frac{3}{4}$};
		\node [below] at (8,0) {$1$};
		\node [left] at (0,2) {$\frac{1}{4}$};
		\node [left] at (0,4) {$\frac{1}{2}$};
		%S_0
		\node [left] at (0,0) {$x$};
		\node [right] at (8,0) {$y$};
		\node [black] at (0,0) {$\bullet$};
		\node [black] at (8,0) {$\bullet$};
		%S_1
		\node [black] at (0,4) {$\bullet$};
		\node [black] at (4,4) {$\bullet$};
		\node [black] at (8,4) {$\bullet$};
		%S_2
		\node [black] at (0,2) {$\bullet$};
		\node [black] at (2,2) {$\bullet$};
		\node [black] at (4,2) {$\bullet$};
		\node [black] at (6,2) {$\bullet$};
		\node [black] at (8,2) {$\bullet$};
		%S_3
		\node [black] at (0,1) {$\bullet$};
		\node [black] at (1,1) {$\bullet$};
		\node [black] at (2,1) {$\bullet$};
		\node [black] at (3,1) {$\bullet$};
		\node [black] at (4,1) {$\bullet$};
		\node [black] at (5,1) {$\bullet$};
		\node [black] at (6,1) {$\bullet$};
		\node [black] at (7,1) {$\bullet$};
		\node [black] at (8,1) {$\bullet$};
		%S_4
		\node [black] at (0,0.5) {$\bullet$};
		\node [black] at (0.5,0.5) {$\bullet$};
		\node [black] at (1,0.5) {$\bullet$};
		\node [black] at (1.5,0.5) {$\bullet$};
		\node [black] at (2,0.5) {$\bullet$};
		\node [black] at (2.5,0.5) {$\bullet$};
		\node [black] at (3,0.5) {$\bullet$};
		\node [black] at (3.5,0.5) {$\bullet$};
		\node [black] at (4,0.5) {$\bullet$};
		\node [black] at (4.5,0.5) {$\bullet$};
		\node [black] at (5,0.5) {$\bullet$};
		\node [black] at (5.5,0.5) {$\bullet$};
		\node [black] at (6,0.5) {$\bullet$};
		\node [black] at (6.5,0.5) {$\bullet$};
		\node [black] at (7,0.5) {$\bullet$};
		\node [black] at (7.5,0.5) {$\bullet$};
		\node [black] at (8,0.5) {$\bullet$};
		\end{tikzpicture}
	\end{center}
\caption{The sets $S_0,\ldots,S_4$} \label{fig_RNP}
\end{figure}
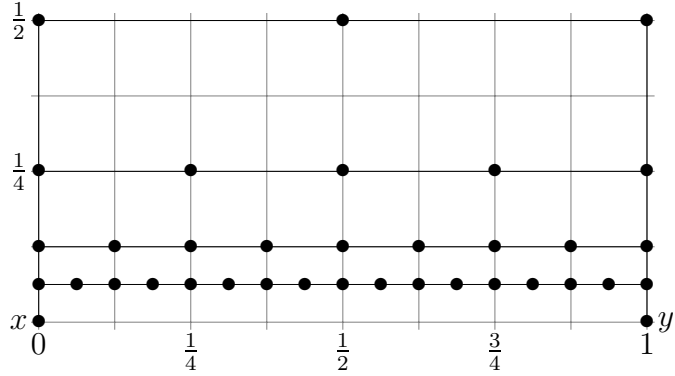

First we show that $M$ is complete. Let $(u_n)$ be a Cauchy sequence in $M$. To show that $(u_n)$ converges to an element of $M$ we consider two cases. 

\begin{enumerate}
    \item First assume that there exists $m\in\mathbb{N}$ such that $(u_n)$ is in $\cup_{n=0}^m S_n$. The set $\cup_{n=0}^m S_n$ is finite and therefore $(u_n)$ is eventually constant.
    \item Assume that for every $m\in\mathbb{N}$, there exist $k>m$ and $n\in\mathbb{N}$ such that $u_n\in S_k$. Choose a subsequence $(u_{n_k})$ such that $u_{n_k}\in S_{m_k}$, where $m_1<m_2<m_3<\cdots$. By definition, the distance between any two different elements $u_{n_k}=(a_{n_k},b_{n_k})$ and $u_{n_l}=(a_{n_l},b_{n_l})$ is 
    $$\min\{a_{n_k}+a_{n_l},2-a_{n_k}-a_{n_l}\}+|b_{n_k}-b_{n_l}|.$$ 
    Since $(u_{n_k})$ is a Cauchy sequence, either $u_{n_k}\rightarrow x$ or $u_{n_k}\rightarrow y$.
\end{enumerate}
According to {\cite[Corollary~2.7]{ANPP}}, $\mathcal{F}(M)$ has the Schur property, since $M$ is countable and complete and by  {\cite[Theorem~4.6]{AGPP}}, $\mathcal{F}(M)$ also has the Radon--Nikod\'ym property.

We will now show that $m_{xy}$ is a Daugavet-point. 
By Theorem \ref{general_daugavet}, it suffices to show that $\|m_{xy}-\nu\|=2$ for every $\nu \in \dent (B_{\mathcal{F}(M)})$. Note that all denting points are preserved extreme points and according to {\cite[Corollary~3.44]{Weaver}} $\nu$ is a preserved extreme point in $B_{\mathcal{F}(M)}$ only if $\nu=m_{uv}$ for some $u, v\in M$ with $u\neq v$. Furthermore, by {\cite[Theorem~2.6]{GPPR}}, if $m_{uv}$ is a denting point of $B_{\mathcal{F}(M)}$, then for every $\varepsilon>0$ there exists $\delta>0$ such that 
$$\seg{u}{v}{\delta}\subseteq B(u,\varepsilon)\cup B(v,\varepsilon).$$
Therefore neither $m_{xy}$ nor $m_{yx}$ is a denting point of $B_{\mathcal{F}(M)}$ because for every $n\in\mathbb{N}$ we have
$$z:=\big(1/2,1/2^{n+2}\big)\in \seg{x}{y}{1/2^{n}}\setminus \big(B(x,1/2)\cup B(y,1/2)\big),$$
since 
$d(x,z)=d(y,z)=1/2+1/2^{n+2}.$

Now fix $m_{uv} \in \dent (B_{\mathcal{F}(M)})$ with $u=(a_1,b_1)$ and $v=(a_2,b_2)$.
Clearly $[u,v]=\{u,v\}$. Let us show that $\|m_{xy}-m_{uv}\|=2$. 

If $b_1=b_2$, then $|a_1-a_2|=b_1$, because otherwise either $(a_1+b_1,b_1)$ or $(a_1-b_1,b_1)$ is in $[u,v]\setminus\{u,v\}$.
Hence 
$$d(x,u)+d(y,v)=a_1+b_1+1-a_2+b_2\ge 1+|a_1-a_2|=d(x,y)+d(u,v)$$
and by Lemma \ref{lip_norm} we get $\|m_{xy}-m_{uv}\|=2$. 

If $b_1\neq b_2$, then either $a_1=a_2=0$ or $a_1=a_2=1$, because otherwise one of the four points $(0,b_1),(0,b_2),(1,b_1),(1,b_2)$ is in $[u,v]\setminus\{u,v\}$. Hence 
$$d(x,u)+d(y,v)=a_1+b_1+1-a_2+b_2=1+b_1+b_2\ge 1+|b_1-b_2|=d(x,y)+d(u,v)$$
and by Lemma \ref{lip_norm} we get $\|m_{xy}-m_{uv}\|=2$. 

Now we have shown that for every $\nu \in \dent (B_{\mathcal{F}(M)})$ we have $\|m_{xy}-\nu\|=2$ and therefore $m_{xy}$ is a Daugavet-point.

%Furthermore, we see that condition $(ii)$ from Corollary \ref{proper_daugavet} does not hold for $m_{xy}$ since $[x,y]=\{x,y\}$ and $\|m_{xy}-m_{xy}\|=0$.

\end{eks}

Our Example \ref{RNP_Daug_ex} and the fact that a Banach space with the Daugavet property cannot have the Radon--Nikod\'ym property inspire the following question.
\begin{prob}
How large does the set of Daugavet-points in Banach space 
need to be in order to ensure that the Banach space fails the Radon--Nikod\'ym property?
\end{prob}

\section{Characterization of Delta-points in Lipschitz-free spaces}
Every Daugavet-point is clearly a $\Delta$-point. In Lipschitz-free spaces a $\Delta$-point in  not necessarily a Daugavet-point (see {\cite[Example~4.4]{JRZ}}).
A molecule $m_{xy}$ is a $\Delta$-point if and only if for every $\varepsilon>0$ and a slice $S$ with $m_{xy}\in S$ there exist $u, v\in M$ with $u\neq v$ such that $m_{uv}\in S$ and $d(u,v)<\varepsilon$ (see {\cite[Theorem~4.7]{JRZ}}). Moreover, the only if part can be proved similarly for any $\mu\in S_{\mathcal{F}(M)}$.

\begin{prop}[cf. {\cite[Theorem~4.7]{JRZ}}]\label{Delta_nec}
Let $\mu\in S_{\mathcal{F}(M)}$ be such that for every $\varepsilon>0$ and a slice $S$ of $S_{\mathcal{F}(M)}$ with $\mu\in S$ there exist $u, v\in M$ with $u\neq v$ such that $m_{uv}\in S$ and $d(u,v)<\varepsilon$. Then $\mu$ is a $\Delta$-point.
\end{prop}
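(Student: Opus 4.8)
The plan is to verify directly that every slice $S$ of $B_{\mathcal{F}(M)}$ containing $\mu$ contains an element almost at distance $2$ from $\mu$. So fix $\varepsilon > 0$ and a slice $S = S(f,\alpha)$ of $B_{\mathcal F(M)}$ with $\mu \in S$, where $f \in S_{\Lip_0(M)}$ and $\alpha > 0$. The first step is to pass from this slice of $B_{\mathcal F(M)}$ to a slice of $S_{\mathcal F(M)}$ in order to apply the hypothesis. Since $\mu \in S(f,\alpha)$ we have $f(\mu) > 1-\alpha$, and because $\mu \in S_{\mathcal F(M)}$ and $\|f\|=1$ we may pick $\alpha' \in (0,\alpha)$ small enough that $f(\mu) > 1-\alpha'$; then the slice $S' := \{\nu \in S_{\mathcal F(M)} : f(\nu) > 1-\alpha'\}$ is a slice of $S_{\mathcal F(M)}$ containing $\mu$, and $S' \subseteq S(f,\alpha)$ (as a subset of the sphere, hence of the ball). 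We will also shrink $\varepsilon$ at the start so that $\varepsilon < \alpha'$ or similar, to have room for the estimates below.

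Next, apply the hypothesis to $S'$: using Lemma~\ref{molekul_dis}, choose $\delta_0 > 0$ such that $\|\mu - m_{uv}\| \ge 2-\varepsilon$ whenever $0 < d(u,v) < \delta_0$; then by assumption on $\mu$ there exist $u, v \in M$ with $u \ne v$, $d(u,v) < \delta_0$, and $m_{uv} \in S'$. In particular $m_{uv} \in S(f,\alpha)$ and $\|\mu - m_{uv}\| \ge 2-\varepsilon$. This already exhibits an element of the slice $S(f,\alpha)$ at distance at least $2-\varepsilon$ from $\mu$, which is exactly what is required for $\mu$ to be a $\Delta$-point.

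The only subtlety — and the step I expect to be the main (though minor) obstacle — is the bookkeeping in passing from a slice of the ball to a slice of the sphere and back: one must be careful that the slice functional $f$ can be normalised to norm one without changing the relevant inequalities, and that the hypothesis, which is stated for slices of $S_{\mathcal F(M)}$, genuinely applies. This is handled by the $\alpha' < \alpha$ argument above, using only that $\mu$ lies on the unit sphere and that $\clconv(\mathcal M(M)) = B_{\mathcal F(M)}$ (so slices of the ball are witnessed by molecules, though here we do not even need that, since we work inside $S'$ directly). Everything else is a direct combination of Lemma~\ref{molekul_dis} with the hypothesis, and no approximation of $f$ or diagonal argument is needed — the statement is essentially the ``easy direction'' of the characterization in \cite[Theorem~4.7]{JRZ}, now observed to hold for arbitrary $\mu \in S_{\mathcal F(M)}$ rather than only molecules.
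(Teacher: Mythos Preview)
Your proof is correct and follows exactly the approach the paper has in mind: the paper does not spell out a proof of Proposition~\ref{Delta_nec} but simply remarks that the argument of \cite[Theorem~4.7]{JRZ} goes through for arbitrary $\mu$, and that argument is precisely the combination of Lemma~\ref{molekul_dis} with the hypothesis that you carry out. The slice bookkeeping with $\alpha'<\alpha$ is harmless but unnecessary (since molecules already lie on the sphere, $S(f,\alpha)\cap S_{\mathcal F(M)}$ is itself a slice of $S_{\mathcal F(M)}$ containing $\mu$, and any molecule found there automatically lies in the ball slice $S(f,\alpha)$); likewise the remark about shrinking $\varepsilon$ below $\alpha'$ is not needed and can be dropped.
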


We do not know whether the converse of Proposition \ref{Delta_nec} holds in general. In this section we will prove this for the case where $\mu$ is a convex combination of molecules.

For $\mu\in S_{\mathcal{F}(M)}$ let
$$\mathcal{M}(\mu):=\{m_{uv}\colon \mu=\lambda m_{uv}+(1-\lambda)\nu \text{ for some }\lambda\in(0,1]\text{ and } \nu\in S_{\mathcal{F}(M)}\}.$$
Note that $f\in S_{\Lip_0(M)}$ with $f(\mu)=1$ satisfies $f(m_{uv})=1$ for every $m_{uv}\in\mathcal{M}(\mu)$.

Let $n\in \mathbb{N}$, $\lambda_1,\ldots,\lambda_n>0$ with $\sum^{n}_{i=1}\lambda_i=1$, and $m_{x_1 y_1},\ldots,m_{x_n y_n}\in S_{\mathcal{F}(M)}$ be such that $\mu:=\sum^{n}_{i=1}\lambda_i m_{x_iy_i}\in S_{\mathcal{F}(M)}$. According to {\cite[Theorem~2.4]{RRZ}}, for every sequence $k_1,\ldots,k_{m+1}\in \{1,\ldots,n\}$ with $k_1=k_{m+1}$, we have
\begin{equation}\label{distance_eq}
    \sum^{m}_{j=1} d(x_{k_j},y_{k_{j+1}})\ge\sum^{m}_{j=1} d(x_{k_j},y_{k_j}).
\end{equation}
Furthermore, equality in \eqref{distance_eq} yields $m_{x_{k_j}y_{k_{j+1}}}\in \mathcal{M}(\mu)$ for every $j\in\{1,\ldots,m\}$ with $x_{k_j}\neq y_{k_{j+1}}$. Indeed, suppose that
$$\sum^{m}_{j=1} d(x_{k_j},y_{k_{j+1}})=\sum^{m}_{j=1} d(x_{k_j},y_{k_j}).$$
For simplicity we will also assume that $k_1,\ldots,k_{m}$ are pairwise distinct.
Set
$$\lambda_0:=\min_{i\in\{1,\ldots,n\}}\frac{\lambda_i}{d(x_i,y_i)}$$
and
$$l_i:=\begin{cases} 
    \lambda_i-\lambda_0d(x_i,y_i), & \text{if }i\in\{k_1,\ldots,k_m\},\\ 
    \lambda_i, &\text{if } i\in\{1,\ldots,n\}\setminus\{k_1,\ldots,k_m\}. \end{cases}$$ 
Clearly $l_i\ge 0$ for every $i\in \{1,\ldots,n\}$. Then
\begin{align*}
    \mu&=\sum^{n}_{i=1}l_im_{x_iy_i}+\lambda_0\sum^{m}_{j=1}d(x_{k_j},y_{k_{j}})m_{x_{k_j}y_{k_j}}\\
    &=\sum^{n}_{i=1}l_im_{x_iy_i}+\lambda_0\sum^{m}_{\substack{j=1\\\hskip2.8mm x_{k_j}\neq y_{k_{j+1}}}}d(x_{k_j},y_{k_{j+1}})m_{x_{k_j}y_{k_{j+1}}}
\end{align*}
and
$$\sum^{n}_{i=1}l_i+\lambda_0\sum^{m}_{j=1}d(x_{k_j},y_{k_{j+1}})=\sum^{n}_{i=1}l_i+\lambda_0\sum^{m}_{j=1}d(x_{k_j},y_{k_{j}})=1.$$

In the proof of {\cite[Theorem~4.7]{JRZ}}, the function
$$f_{xy}(p)=\frac{d(x,y)}{2}\cdot\frac{d(y,p)-d(x,p)}{d(x,p)+d(y,p)},$$
where $x, y\in M$ with $x\neq y$, played a key role. To generalize  {\cite[Theorem~4.7]{JRZ}} we first need to find a function $f_\mu$ with similar properties as $f_{xy}$ for any $\mu\in\conv\big(\mathcal{M}(M)\big)\cap S_{\mathcal{F}(M)}$, which is accomplished by the following lemmas.

\begin{lem}\label{function}
    Let $n\in \mathbb{N}$, $\lambda_1,\ldots,\lambda_n>0$ with $\sum^{n}_{i=1}\lambda_i=1$, and $m_{x_1 y_1},\ldots,m_{x_n y_n}\in S_{\mathcal{F}(M)}$ be such that  $\mu:=\sum^{n}_{i=1}\lambda_i m_{x_iy_i}\in S_{\mathcal{F}(M)}$. There exists $f\in S_{\Lip_0(M)}$ with
    $f(\mu)=1$
    such that for all $i,j\in\{1,\ldots,n\}$ with $x_{i}\neq y_{j}$ the following conditions are equivalent:
    \begin{enumerate}[label={(\roman*)}]
        \item $f(m_{x_{i}y_{j}})=1$;
        \item $m_{x_{i}y_{j}}\in\mathcal{M}(\mu)$.
    \end{enumerate}
\end{lem}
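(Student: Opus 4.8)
The plan is to build $f$ explicitly from the molecules in $\mathcal{M}(\mu)$ and a supporting functional, mimicking the construction of $f_{xy}$ from \cite{JRZ}. First I would pick any $g\in S_{\Lip_0(M)}$ with $g(\mu)=1$; such $g$ exists by Hahn--Banach, and as noted in the excerpt it already satisfies $g(m_{uv})=1$ for every $m_{uv}\in\mathcal{M}(\mu)$, so condition $(ii)\Rightarrow(i)$ is automatic for \emph{any} norming functional. The real content is the converse: I must perturb $g$ so that $f(m_{x_iy_j})=1$ \emph{forces} $m_{x_iy_j}\in\mathcal{M}(\mu)$. The idea is to add a small correction term that is still norm-one-preserving on $\mu$ but strictly decreases the value on every pair $(x_i,y_j)$ that is \emph{not} in $\mathcal{M}(\mu)$.

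The natural candidate is a convex-combination-of-$f_{xy}$ construction: set
$$h(p):=\sum_{i=1}^{n}\lambda_i\, f_{x_iy_i}(p),\qquad f_{xy}(p)=\frac{d(x,y)}{2}\cdot\frac{d(y,p)-d(x,p)}{d(x,p)+d(y,p)},$$
and then take $f:=\tfrac12 g+\tfrac12 h$, or more robustly a suitable normalization $f:=(g+h)/\|g+h\|$. One checks (as in \cite{JRZ}) that each $f_{x_iy_i}\in B_{\Lip_0(M)}$ with $f_{x_iy_i}(m_{x_iy_i})=1$, hence $h(\mu)=1$ and $\|h\|\le 1$, so $f(\mu)=1$ and $\|f\|=1$. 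Now suppose $f(m_{x_iy_j})=1$ for some $i,j$ with $x_i\neq y_j$. Since $f$ is an average (or convex combination) of functionals each of norm $\le 1$, and $m_{x_iy_j}$ is a norm-one element, equality $f(m_{x_iy_j})=1$ forces $g(m_{x_iy_j})=1$ \emph{and} $h(m_{x_iy_j})=1$, and the latter in turn forces $f_{x_k y_k}(m_{x_iy_j})=1$ for every $k$ with $\lambda_k>0$. The key computation is that $f_{x_ky_k}(m_{x_iy_j})=1$ exactly when $d(x_i,x_k)+d(y_k,y_j)=d(x_i,y_j)+d(x_k,y_k)$, i.e. when $x_k\in[x_i,y_k]$... more precisely when the triangle-type equality holds; chaining these equalities across $k$ (using that the $m_{x_ky_k}$ build $\mu$ and invoking the equality case of \eqref{distance_eq} and the decomposition of $\mu$ displayed just before the lemma) yields that $m_{x_iy_j}$ enters a convex decomposition of $\mu$, i.e. $m_{x_iy_j}\in\mathcal{M}(\mu)$.

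The main obstacle I anticipate is precisely making that last chaining argument airtight: extracting from ``$f_{x_ky_k}(m_{x_iy_j})=1$ for all $k$'' a genuine convex representation $\mu=\lambda m_{x_iy_j}+(1-\lambda)\nu$ with $\nu\in S_{\mathcal{F}(M)}$. This is where one needs the full strength of \cite[Theorem~2.4]{RRZ} and the bookkeeping already set up in the paragraph preceding the lemma (the numbers $\lambda_0$, $l_i$, and the rewriting of $\mu$ using pairs $m_{x_{k_j}y_{k_{j+1}}}$). Concretely, I would first translate each equality $f_{x_ky_k}(m_{x_iy_j})=1$ into the metric identity $d(y_k,y_j)+d(x_i,x_k) = d(x_i,y_j) + d(x_k,y_k)$ via the definition of $f_{x_ky_k}$ and the fact that $\|f_{x_ky_k}\|\le1$ with equality of the Lipschitz quotient along $(x_i,y_j)$ only at the extremal configuration; then I would run this through the cycle structure to produce the decomposition. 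A secondary technical point is checking $\|g+h\|=\|g+h\|$ is bounded below away from $0$ (it is, since $(g+h)(\mu)=2$), so the normalization is harmless, and confirming $f\in\Lip_0(M)$, i.e. $f(0)=0$, which holds because $g(0)=0$ and $f_{x_ky_k}(0)=0$ by construction. Once the decomposition is obtained, the equivalence $(i)\Leftrightarrow(ii)$ follows, completing the proof.
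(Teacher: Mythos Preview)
There is a genuine gap in your approach: the claim that $h(\mu)=1$ for $h:=\sum_{i}\lambda_i f_{x_iy_i}$ is false in general. Indeed,
\[
h(\mu)=\sum_{i,k}\lambda_i\lambda_k\,f_{x_ky_k}(m_{x_iy_i}),
\]
and this equals $1$ only if $f_{x_ky_k}(m_{x_iy_i})=1$ for \emph{every} pair $(i,k)$, i.e., only if $x_i,y_i\in[x_k,y_k]$ for all $i,k$. The hypothesis $\|\mu\|=1$ does not force this. A concrete counterexample: take $M=\{0,1,2,3\}\subset\mathbb{R}$, $x_1=0$, $y_1=1$, $x_2=2$, $y_2=3$, and $\mu=\tfrac12 m_{x_1y_1}+\tfrac12 m_{x_2y_2}$; then $\|\mu\|=1$ (normed by the sawtooth $f(0)=0,f(1)=-1,f(2)=0,f(3)=-1$), but a direct computation gives $f_{x_1y_1}(m_{x_2y_2})=-\tfrac{1}{15}$, so $h(\mu)<1$. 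Consequently $f=\tfrac12(g+h)$ (or its normalization) does not satisfy $f(\mu)=1$, and the whole scheme collapses.

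The paper's proof avoids this by a completely different construction. Rather than averaging the $f_{x_ky_k}$, it works on the finite set $M_0=\{x_1,\dots,x_n,y_1,\dots,y_n\}$: for each ``bad'' pair $k=(k_1,k_2)$ with $m_{x_{k_1}y_{k_2}}\notin\mathcal{M}(\mu)$, it identifies (via a chain argument with the set $B$) a subset $C\subseteq M_0$ containing $y_{k_2}$ but not $x_{k_1}$, on which one can shift $g$ by a small $\delta$ without changing $g(x_i)-g(y_i)$ for any $i$, yet strictly decreasing the slope across $(x_{k_1},y_{k_2})$. Extending each perturbed function $h_k$ by McShane--Whitney and averaging over all bad pairs gives the desired $f$. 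The cycle equality \eqref{distance_eq} is used not to \emph{build} a decomposition of $\mu$, but to rule out $k_1\in B$ (since $k_1\in B$ would produce a cycle of equalities forcing $m_{x_{k_1}y_{k_2}}\in\mathcal{M}(\mu)$). You should abandon the convex-combination-of-$f_{xy}$ idea and follow this discrete perturbation route instead.
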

\begin{proof}
As we noticed earlier, $(ii)\Rightarrow(i)$ for any $f\in S_{\Lip_0(M)}$ with $f(\mu)=1$. Therefore it is sufficient to find $f\in S_{\Lip_0(M)}$ with $f(\mu)=1$ such that $(i)\Rightarrow(ii)$ holds as well. We will start with any function $g\in S_{\Lip_0(M)}$ such that $g(\mu) =1$.
Let 
$$A:=\big\{(i,j)\colon x_{i}\neq y_{j}, m_{x_{i}y_{j}}\notin\mathcal{M}(\mu)\big\}.$$
If $A$ is empty, then we can take $f=g$. From here on we will assume that $A\neq\emptyset$. For every $k:=(k_1,k_2)\in A$ we will define a Lipschitz function $h_k$ such that $h_k(x_{k_1})-h_k(y_{k_2})<d(x_{k_1},y_{k_2})$ and $h_k(x_{i})-h_k(y_{i})=d(x_{i},y_{i})$ for every $i\in\{1,\ldots,n\}$. 

Fix $k:=(k_1,k_2)\in A$. 
If $g(x_{k_1})-g(y_{k_2})<d(x_{k_1},y_{k_2})$, then let $h_{k}=g$.

Now we consider the case where $g(x_{k_1})-g(y_{k_2})=d(x_{k_1},y_{k_2})$. Set
$$M_0:=\{x_1,\ldots,x_n,y_1,\ldots,y_n\}\subseteq M.$$
Let us first define the function $h_{k}$ on $M_0$. To do so, we will first define a set of indices. Let $B\subseteq\{1,\ldots,n\}$ be such that $i\in B$ if and only if there exist $m\in \mathbb{N}$ and $k_3,\ldots,k_{m+1}\in \{1,\ldots,n\}$ with $k_{m+1}=i$ such that 
$$g(x_{k_{j}})-g(y_{k_{j+1}})=d(x_{k_{j}},y_{k_{j+1}})$$
for every $j\in\{1,\ldots,m\}$.
Note that $k_2\in B$, because from $g(\mu)=1$ we get $g(x_{k_2})-g(y_{k_2})=d(x_{k_2},y_{k_2})$.

Suppose that $k_1\in B$. Then there exist $m\in \mathbb{N}$ and $k_3,\ldots,k_{m+1}\in \{1,\ldots,n\}$ with $k_{m+1}=k_1$ such that 
$$g(x_{k_{j}})-g(y_{k_{j+1}})=d(x_{k_{j}},y_{k_{j+1}})$$
for every $j\in\{1,\ldots,m\}$. 
By reshuffling we get
$$\sum^{m}_{j=1} d(x_{k_j},y_{k_{j+1}})=\sum^{m}_{j=1}\big(g(x_{k_j})-g(y_{k_{j+1}})\big)=\sum^{m}_{j=1}\big(g(x_{k_j})-g(y_{k_j})\big)=\sum^{m}_{j=1} d(x_{k_j},y_{k_j}).$$
Hence $m_{x_{k_1}y_{k_{2}}}\in\mathcal{M}(\mu)$, which contradicts $(k_1,k_2)\in A$. Therefore $k_1\notin B$.

Let $C:=\{x_i\colon i\in B\}\cup\{y_i\colon i\in B\}$. Suppose that $g(p)-g(q)=d(p,q)$ for some $p\in C$ and $q\in M_0$. We will show that $q\in C$. Let $i\in B$ and $j\in \{1,\ldots,n\}$ be such that $p\in \{x_{i},y_{i}\}$ and $q\in \{x_{j},y_{j}\}$. Then
\begin{align*}
    g(x_{i})-g(y_{j})&=g(x_{i})-g(p)+g(p)-g(q)+g(q)-g(y_{j})\\
    &=d(x_{i},p)+d(p,q)+d(q,y_{j})\\
    &\ge d(x_{i},y_{j}).
\end{align*}
Therefore $j\in B$, i.e., $q\in C$. From this we deduce that if $p\in C$ and $q\in M_0\setminus C$, then $g(p)-g(q)<d(p,q)$.

Let $\delta>0$ be such that for every $p,q\in M_0$, if $g(p)-g(q)<d(p,q)$, then $g(p)-g(p)+\delta<d(p,q)$. Finally we are ready to define the function $h_{k}$ on $M_0$. Let $h_k\colon M_0\rightarrow \mathbb{R}$ be such that
$$h_{k}(p)=\begin{cases} 
    g(p)+\delta, & \text{if } p\in C,\\ 
    g(p), & \text{if }  p\in M_0\setminus C.
    \end{cases}$$
If $p\in C$ and $q\in M_0\setminus C$, then $g(p)-g(q)+\delta<d(p,q)$, which gives us
$$|h_{k}(p)-h_{k}(q)|=|g(p)-g(q)+\delta|<d(p,q).$$
Therefore the Lipschitz constant of $h_k$ on $M_0$ is smaller than or equal to 1. Furthermore, for every $i\in \{1,\ldots,n\}$ either $x_i,y_i\in C$ or $x_i,y_i\notin C$ giving us 
$$h_{k}(x_i)-h_{k}(y_i)=g(x_i)-g(y_i)=d(x_i,y_i).$$
Last we point out that
$$h_{k}(x_{k_1})-h_{k}(y_{k_2})=g(x_{k_1})-g(y_{k_2})-\delta<d(x_{k_1},y_{k_2}).$$ 
We will extend $h_{k}$ from $M_0$ to $M$ by the 
McShane--Whitney Theorem. Then the Lipschitz constant of $h_k$ on $M$ is 1.

Now we have defined $h_{k}$ for every $k\in A$. Let
$$f:=\frac{1}{|A|}\sum_{k\in A}h_{k}+a,$$
where $a\in \mathbb{R}$ is such that $f(0)=0$. Clearly $\|f\|\le 1$ and 
$$f(\mu)=\frac{1}{|A|}\sum_{k\in A}\sum^{n}_{i=1}\lambda_i \frac{h_{k}(x_i)-h_{k}(y_i)}{d(x_i,y_i)}=1.$$
Furthermore, for every $k=(k_1,k_2)\in A$ we have
\begin{align*}
    f(x_{k_1})-f(y_{k_2})&=\frac{1}{|A|}\sum_{l\in A}\big(h_{l}(x_{k_1})-h_{l}(y_{k_2})\big)\\
    &\le \frac{|A|-1}{|A|}d(x_{k_1},y_{k_2})+\frac{1}{|A|}\big(h_{k}(x_{k_1})-h_{k}(y_{k_2})\big)\\
    &<d(x_{k_1},y_{k_2}).
\end{align*}
With this we conclude the proof.
\end{proof}

\begin{lem}\label{f_mu}
    Let $n\in \mathbb{N}$, $\lambda_1,\ldots,\lambda_n>0$ with $\sum^{n}_{i=1}\lambda_i=1$, and $m_{x_1 y_1},\ldots,m_{x_n y_n}\in S_{\mathcal{F}(M)}$ be such that  $\mu:=\sum^{n}_{i=1}\lambda_i m_{x_iy_i}\in S_{\mathcal{F}(M)}$. There exist $f_\mu\in S_{\Lip_0(M)}$ and $\delta>0$ such that the following holds:
    \begin{enumerate}[label={(\arabic*)}]
        \item $f_\mu(\mu) =1$;
        \item For every $u,v\in M$ and $\alpha\in(0,\delta)$ with $u\neq v$ and $m_{uv}\in S(f_{\mu},\alpha)$ there exist $i,j\in \{1,\ldots,n\}$ with $x_{i}\neq y_{j}$ such that $m_{x_iy_j}\in\mathcal{M}(\mu)$
        and
        $$(1 - \alpha)\max\{d(x_{i},v) + d(y_{j},v),d(x_{i},u) + d(y_{j},u)\}< d(x_{i},y_{j}).$$
    \end{enumerate}
\end{lem}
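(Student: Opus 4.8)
The plan is to build $f_\mu$ on top of the function $f$ produced by Lemma~\ref{function}, and to choose $\delta$ small enough that any molecule $m_{uv}$ lying in a sufficiently thin slice $S(f_\mu,\alpha)$ is forced to ``align'' with one of the pairs $(x_i,y_j)$ with $m_{x_iy_j}\in\mathcal M(\mu)$. The key observation is that, by Lemma~\ref{function}, the function $f$ satisfies $f(x_i)-f(y_i)=d(x_i,y_i)$ for all $i$, and $f(x_i)-f(y_j)=d(x_i,y_j)$ \emph{only} when $m_{x_iy_j}\in\mathcal M(\mu)$ (using that $x_i\neq y_j$). Since the relevant index set $\{1,\dots,n\}$ is finite, there is a uniform gap: there exists $\eta>0$ such that for every pair $(i,j)$ with $x_i\neq y_j$ and $m_{x_iy_j}\notin\mathcal M(\mu)$ we have $f(x_i)-f(y_j)\le d(x_i,y_j)-\eta\,d(x_i,y_j)$ (scaling the gap by the diameter of $M_0$ to make it relative). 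I would take $f_\mu:=f$ and then tune $\delta$ against this $\eta$.

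Next I would exploit the segment/reshuffle machinery. Let $u\neq v$ with $m_{uv}\in S(f_\mu,\alpha)$, i.e. $f(u)-f(v)>(1-\alpha)d(u,v)$. For each $i$ write the chain
\begin{align*}
    \big(f(x_i)-f(y_i)\big)+\big(f(u)-f(v)\big)
    &=\big(f(x_i)-f(v)\big)+\big(f(u)-f(y_i)\big)\\
    &\le d(x_i,v)+d(u,y_i).
\end{align*}
Summing $\lambda_i$ times this over $i$, using $\sum\lambda_i=1$, $f(\mu)=1$ and $f(u)-f(v)>(1-\alpha)d(u,v)$, one gets
$\sum_i\lambda_i\big(d(x_i,v)+d(u,y_i)\big)> \sum_i\lambda_i d(x_i,y_i)+(1-\alpha)d(u,v)$.
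But by the triangle inequality and~\eqref{distance_eq}-type reasoning (applied to the two-term chains $x_i\to v\to y_i$ through $u$, and its reverse $y_i\to u\to x_i$ through $v$) each summand is at least $d(x_i,y_i)$, so there must be \emph{some} index $i$ for which $d(x_i,v)+d(u,y_i)$ is close to $d(x_i,y_i)+d(u,v)$, quantitatively within $C\alpha\max\{d(x_i,y_i),d(u,v)\}$ for an explicit constant $C$. Applying the reverse chain to the same $i$ (or a companion index $j$) gives the analogous near-equality for $d(y_i,v)+d(u,x_i)$; combining the two near-equalities, $u$ is forced to lie within a small multiple of $\alpha$ of the segment $[x_i,y_j]$ in the sense required by~(2), and $v$ similarly on the other end. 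The ``$\max$'' form of conclusion~(2) is exactly what these two symmetric chains produce.

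Finally, I would close the loop: if the index $i$ obtained this way already has $m_{x_iy_i}\in\mathcal M(\mu)$ we are done with $j=i$; otherwise I would argue, using the uniform gap $\eta$, that for $\alpha<\delta$ small enough the near-equality $f(x_i)-f(y_j)\approx d(x_i,y_j)$ extracted from the chain is incompatible with $m_{x_iy_j}\notin\mathcal M(\mu)$, hence the pair $(x_i,y_j)$ actually satisfies $m_{x_iy_j}\in\mathcal M(\mu)$ by Lemma~\ref{function}(i)$\Rightarrow$(ii). The main obstacle I anticipate is bookkeeping the constants: one must choose $\delta$ purely in terms of $\eta$, $n$, $\min_i d(x_i,y_i)$ and $\max_{i,j} d(x_i,y_j)$ (all positive and finite because we are on the finite set $M_0$), and verify that the perturbation passed from the slice width $\alpha$ through two triangle-inequality chains and an averaging step stays below $\eta$; the geometric content is routine once the averaging inequality above is set up, but the order of quantifiers ($\delta$ before $u,v,\alpha$) has to be respected carefully.
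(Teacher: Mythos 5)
There is a genuine gap, and it is structural, not (as you anticipate at the end) a matter of bookkeeping constants.

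You take $f_\mu := f$, the function produced by Lemma~\ref{function}. But that function only normalizes values on the \emph{finite} set $M_0 = \{x_1,\dots,x_n,y_1,\dots,y_n\}$; it carries no information about approximate segments. A molecule $m_{uv}$ with $u,v$ far from $M_0$ can land in $S(f,\alpha)$ while no pair $(i,j)$ satisfies conclusion~(2). Concretely, take $M=\mathbb{R}$, $\mu=m_{1,0}$ (so $n=1$, $x_1=1$, $y_1=0$); Lemma~\ref{function} may return $g(t)=t$, and with $u=10$, $v=9$ one has $g(m_{uv})=1\in S(g,\alpha)$ for all $\alpha$, yet $d(x_1,v)+d(y_1,v)=17\gg 1=d(x_1,y_1)$, so~(2) fails for arbitrarily small $\alpha$. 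Your chain inequality
\begin{equation*}
d(x_i,v)+d(u,y_i)\ \ge\ \big(f(x_i)-f(v)\big)+\big(f(u)-f(y_i)\big)\ =\ d(x_i,y_i)+\big(f(u)-f(v)\big)
\end{equation*}
indeed holds (in fact for \emph{every} $i$, so the averaging step is superfluous), but it only says $\|m_{x_iy_i}+m_{uv}\|$ is close to $2$ (cf.\ Lemma~\ref{lip_norm}); it does not force $u,v$ to lie near the approximate segment $[x_i,y_j]$, as the example above shows. The ``reverse chain'' you invoke produces no useful lower bound, and the assertion that ``combining the two near-equalities, $u$ is forced to lie within a small multiple of $\alpha$ of the segment'' is precisely the missing step — it is stated, not proved, and it is false for $f_\mu=f$.

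This is exactly why the paper does not take $f_\mu=g$. It first runs Lemma~\ref{function} to obtain $g$, then superposes the $f_{xy}$-type functions through the auxiliary $h_i$'s and sets $f_\mu(p)=\max_i\{g(x_i)-h_i(p)\}+a$. The functions $f_{x_iy_j}$ have the crucial geometric localization property (this is {\cite[Lemma~3.6]{GPR}}): if $f_{x_iy_j}(u)-f_{x_iy_j}(v)>(1-\alpha)d(u,v)$, then $\max\{d(x_i,u)+d(y_j,u),\,d(x_i,v)+d(y_j,v)\}<d(x_i,y_j)/(1-\alpha)$, which is exactly the inequality in~(2). That ingredient has no analogue in your proposal and cannot be recovered from $g$ alone. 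Your use of Lemma~\ref{function} and the uniform gap $\delta$ to ensure $m_{x_iy_j}\in\mathcal{M}(\mu)$ is the right idea for the last part of the argument, but the segment localization must come from the $f_{xy}$-construction.
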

\begin{proof}
By Lemma \ref{function} there exists $g\in S_{\Lip_0(M)}$ with $g(\mu)=1$
such that for $i, j\in \{1,\ldots,n\}$  with $x_{i}\neq y_{j}$ we have $g(m_{x_iy_j})=1$ if and only if $m_{x_iy_j}\in\mathcal{M}(\mu)$.
For every $i\in \{1,\ldots,n\}$ let $h_i\colon M\rightarrow \mathbb{R}$ be such that
$$h_i(p)=\max\Big\{\frac{g(x_i)-g(y_j)}{d(x_i,p)+d(y_j,p)}d(x_i,p)\colon j\in\{1,\ldots,n\}, x_{i}\neq y_{j}\Big\}.$$
Clearly $g(x_i)-g(y_i)>0$, therefore $h_i(p)\ge0$ for every $i\in\{1,\ldots,n\}$ and $p\in M$. Define $f_\mu\colon M\rightarrow \mathbb{R}$ by
$$f_\mu(p)=\max_{i\in\{1,\ldots,n\}}\big\{g(x_i)-h_i(p)\big\}+a,$$
where $a\in \mathbb{R}$ is such that $f_{\mu}(0)=0$.
Note that for all $i, j\in \{1,\ldots,n\}$ we have $d(x_i,y_j)\ge g(x_i)-g(y_j)$ and
\begin{align}
    \nonumber\frac{d(x_i,y_j)}{2}-f_{x_iy_j}(p)&=\frac{d(x_i,y_j)}{2}\frac{d(x_i,p)+d(y_j,p)-\big(d(y_j,p)-d(x_i,p)\big)}{d(x_i,p)+d(y_j,p)}\\
    &=\frac{d(x_i,y_j)}{g(x_i)-g(y_j)}\frac{g(x_i)-g(y_j)}{d(x_i,p)+d(y_j,p)}d(x_i,p),\label{eq_delta}
\end{align}
then from {\cite[Proposition~1.32]{Weaver}}  and {\cite[Lemma~3.6]{GPR}} we get $\|f_\mu\|\le 1$. For every $i\in \{1,\ldots,n\}$ we have 
$$f_\mu(x_i)\ge g(x_i)-h_i(x_i)+a= g(x_i)+a.$$ 
For fixed $j\in \{1,\ldots,n\}$ let $i\in \{1,\ldots,n\}$ be such that  $f_\mu(y_j)=g(x_i)-h_i(y_j)+a$. If $x_{i}= y_{j}$, then $f_\mu(y_j)=g(y_j)-h_i(y_j)+a\le g(y_j)+a$,
otherwise 
$$f_\mu(y_j)=g(x_i)-h_i(y_j)+a\le g(x_i)-\frac{g(x_i)-g(y_j)}{d(x_i,y_j)+d(y_j,y_j)}d(x_i,y_j)+a=g(y_j)+a.$$
This gives us $f_\mu(x_i)-f_\mu(y_j)\ge g(x_i)-g(y_j)$ for every $i,j\in \{1,\ldots,n\}$ and therefore $f_\mu(\mu)\ge g(\mu)=1$. Note that $\|f_\mu\|\le1$, hence $f_\mu\in S_{\Lip_0(M)}$ and $f_\mu(\mu)=1$.

Choose $\delta>0$ such that if $g(x_i)-g(y_j)<d(x_i,y_j)$ for some $i,j\in\{1,\ldots,n\}$, then $g(x_i)-g(y_j)<(1-\delta)d(x_i,y_j)$.  We will now show that condition $(2)$ holds. Fix $u, v\in M$ with $u\neq v$ and $\alpha\in(0,\delta)$ such that $m_{uv}\in S(f_{\mu},\alpha)$. 
Let $i\in \{1,\ldots,n\}$ be such that $f_\mu(u)=g(x_{i})-h_{i}(u)+a.$
Also $f_\mu(v)\ge g(x_{i})-h_{i}(v)+a$ and therefore
$$(1-\alpha)d(u,v)<f_\mu(u)-f_\mu(v)\le h_{i}(v)-h_{i}(u).$$
There exists $j\in \{1,\ldots,n\}$ with $x_i\neq y_j$ such that
$$h_{i}(v)=\frac{g(x_{i})-g(y_{j})}{d(x_{i},v)+d(y_{j},v)}d(x_{i},v).$$ 
By \eqref{eq_delta} we get that
\begin{align*}
    (1-\alpha)d(u,v)&< h_{i}(v)-h_{i}(u)\\
    &\le \frac{g(x_{i})-g(y_{j})}{d(x_{i},v)+d(y_{j},v)}d(x_{i},v)-\frac{g(x_{i})-g(y_{j})}{d(x_{i},u)+d(y_{j},u)}d(x_{i},u)\\
    &=\frac{g(x_{i})-g(y_{j})}{d(x_{i},y_{j})}\big(f_{x_{i}y_{j}}(u)-f_{x_{i}y_{j}}(v)\big)\\
    &\le\min\Big\{\frac{g(x_{i})-g(y_{j})}{d(x_{i},y_{j})}d(u,v),f_{x_{i}y_{j}}(u)-f_{x_{i}y_{j}}(v)\Big\}.
\end{align*}
From {\cite[Lemma~3.6]{GPR}} we get $$d(x_{i},y_{j})>(1-\alpha)\max\big\{d(x_{i},u)+d(y_{j},u),d(x_{i},v)+d(y_{j},v)\big\}.$$ 
Furthermore, 
$$g(x_{i})-g(y_{j})>(1-\alpha)d(x_{i},y_{j})>(1-\delta)d(x_{i},y_{j})$$
and therefore $g(x_{i})-g(y_{j})=d(x_{i},y_{j})$, i.e., $m_{x_{i}y_{j}}\in\mathcal{M}(\mu)$.
With this we conclude the proof.
\end{proof}

\begin{thm}\label{Delta_general}
Let $\mu\in\conv\big(\mathcal{M}(M)\big)\cap S_{\mathcal{F}(M)}$. Then $\mu$ is a $\Delta$-point if and only if for every $\varepsilon>0$ and a slice $S$ of $S_{\mathcal{F}(M)}$ with $\mu\in S$ there exist $u, v\in M$ with $u\neq v$ such that $m_{uv}\in S$ and $d(u,v)<\varepsilon$.
\end{thm}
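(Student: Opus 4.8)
The ``if'' implication is exactly Proposition \ref{Delta_nec} (stated there for an arbitrary $\mu\in S_{\mathcal F(M)}$), so the plan is to prove the ``only if'' direction: assuming $\mu=\sum_{i=1}^n\lambda_i m_{x_iy_i}$ is a $\Delta$-point, and given $\varepsilon>0$ and a slice $S(f,\alpha)$ of $S_{\mathcal F(M)}$ with $\mu\in S(f,\alpha)$ (so $f\in S_{\Lip_0(M)}$, $f(\mu)>1-\alpha$), I want $u\ne v$ with $m_{uv}\in S(f,\alpha)$ and $d(u,v)<\varepsilon$. First I fix data depending only on $\mu$: the function $f_\mu\in S_{\Lip_0(M)}$ and the number $\delta>0$ from Lemma \ref{f_mu}, the number $D_{\max}:=\max_{i,j\le n}d(x_i,y_j)$, and a number $\lambda_0\in(0,1]$ so small that every molecule $m_{x_iy_j}$ ($i,j\le n$, $x_i\ne y_j$) lying in $\mathcal M(\mu)$ can be written $\lambda m_{x_iy_j}+(1-\lambda)\sigma$ with $\lambda\ge\lambda_0$ and $\sigma\in S_{\mathcal F(M)}$; such $\lambda_0$ exists since there are only finitely many such molecules. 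It follows that any $h\in S_{\Lip_0(M)}$ with $h(\mu)>1-\eta$ satisfies $h(m_{x_iy_j})>1-\eta/\lambda_0$ for each such molecule, and in particular $f_\mu(m_{x_iy_j})=1$ on $\mathcal M(\mu)$.

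The core step is to extract, using the $\Delta$-point hypothesis, a single molecule $m_{uv}$ that lies at once in $S(f,\alpha)$, in a prescribed small slice $S(f_\mu,\theta)$, and is ``far from $\mu$'' in the sharp sense that $\psi(m_{uv})<-1+\theta$ for some $\psi\in S_{\Lip_0(M)}$ with $\psi(\mu)>1-\eta$; the small parameters $\theta<\delta$ and $\eta$ are pinned down at the very end in terms of $\varepsilon,\lambda_0,D_{\max}$. Write $f(\mu)=1-\alpha+3\kappa$ with $\kappa>0$, pick $t\in(0,1)$ close to $1$, and set $g:=(1-t)f+tf_\mu$; then $\|g\|\le1$ and $g(\mu)>1-(1-t)(\alpha-2\kappa)$, so $\{\nu\in B_{\mathcal F(M)}:g(\nu)>1-(1-t)(\alpha-2\kappa)\}$ is a slice of $B_{\mathcal F(M)}$ containing $\mu$. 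Applying the $\Delta$-point property to it gives $\nu$ in that slice with $\|\mu-\nu\|\ge 2-\eta$; fix a norming $\psi$ (so $\psi(\mu)>1-\eta$, $\psi(\nu)<-1+\eta$) and approximate $\nu$ in norm by a finite convex combination $\sigma=\sum_k t_k m_{a_kb_k}$ of molecules (possible since $B_{\mathcal F(M)}=\clconv(\mathcal M(M))$). Now a mass comparison applies: the constituents with $g$-value above $1-(1-t)(\alpha-\kappa)$ carry at least a fixed positive proportion of the total mass (of order $\kappa/\alpha$), while those with $\psi$-value $\ge-1+\theta$ carry an arbitrarily small proportion once $\eta$ and the approximation error are small relative to $\theta$; hence some index $k_0$ is good for both. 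Setting $u:=a_{k_0}$, $v:=b_{k_0}$ and using $f,f_\mu\le1$ on $B_{\mathcal F(M)}$ together with the explicit $g$-threshold, $g(m_{uv})>1-(1-t)(\alpha-\kappa)$ forces $f(m_{uv})>1-\alpha$ (so $m_{uv}\in S(f,\alpha)$) and $f_\mu(m_{uv})>1-(1-t)(\alpha-\kappa)/t$, the latter being $>1-\theta$ for $t$ close enough to $1$.

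Finally I would close with a metric estimate. Since $m_{uv}\in S(f_\mu,\theta)$ and $\theta<\delta$, Lemma \ref{f_mu} gives $i,j\le n$ with $x_i\ne y_j$, $m_{x_iy_j}\in\mathcal M(\mu)$, $d(x_i,v)+d(y_j,v)<D/(1-\theta)$ and $d(x_i,u)+d(y_j,u)<D/(1-\theta)$, where $D:=d(x_i,y_j)$. Moreover $f_\mu(x_i)-f_\mu(y_j)=D$, $\psi(x_i)-\psi(y_j)>(1-\eta/\lambda_0)D$, $f_\mu(u)-f_\mu(v)>(1-\theta)d(u,v)$ and $\psi(u)-\psi(v)<(-1+\theta)d(u,v)$. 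Expanding $f_\mu(x_i)-f_\mu(y_j)$ along the chain $x_i,v,u,y_j$ and using that $f_\mu$ is $1$-Lipschitz gives $d(x_i,v)+d(u,y_j)\ge D+(1-\theta)d(u,v)$; adding the two Lemma \ref{f_mu} inequalities then yields $d(x_i,u)+d(v,y_j)<(1+\theta)D/(1-\theta)-(1-\theta)d(u,v)$, while expanding $\psi(x_i)-\psi(y_j)$ along $x_i,u,v,y_j$ gives $d(x_i,u)+d(v,y_j)>(1-\eta/\lambda_0)D+(1-\theta)d(u,v)$. Comparing the last two inequalities gives $2(1-\theta)d(u,v)<D\big(\frac{2\theta}{1-\theta}+\frac{\eta}{\lambda_0}\big)$, hence $d(u,v)<\frac{D_{\max}\theta}{(1-\theta)^2}+\frac{D_{\max}\eta}{2\lambda_0(1-\theta)}$, which is $<\varepsilon$ once $\theta$ and $\eta$ were chosen small enough; thus $m_{uv}$ is the molecule we wanted. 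I expect the two delicate points to be the mass-comparison extraction — because the fixed slice $S(f,\alpha)$ gives only ``weak'' peaking at $\nu$, the parameters $t$, the approximation error, and $\eta$ must be calibrated, in that order, against $\theta$ and $\kappa$ — and this last estimate, which closes only because $f_\mu$ (via Lemma \ref{f_mu}) bounds $d(x_i,u)+d(v,y_j)$ from above while $\psi$ bounds it from below.
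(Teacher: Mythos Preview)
Your plan is correct and follows the same overall strategy as the paper's proof: combine the given $f$ with the special functional $f_\mu$ from Lemma~\ref{f_mu}, use the $\Delta$-point hypothesis to find a molecule $m_{uv}$ that is simultaneously in the intersection slice and almost at distance~$2$ from $\mu$, then squeeze $d(u,v)$ between an upper bound coming from Lemma~\ref{f_mu} and a lower bound coming from the ``far'' condition. The differences are purely in execution. The paper streamlines the argument by invoking three ready-made tools that you rebuild by hand: it uses \cite[Lemma~2.1]{IK} to shrink $\alpha$ at the outset (so that a single small parameter does all the work), it uses \cite[Remark~2.4]{JRZ} to get a \emph{molecule} $m_{uv}$ in the slice with $\|\mu-m_{uv}\|$ large directly (this is exactly what your mass-comparison extraction recovers), and it converts $\|m_{x_iy_j}\pm m_{uv}\|\ge 2-\alpha$ into metric inequalities via Lemma~\ref{lip_norm} rather than via chain expansions with $f_\mu$ and $\psi$. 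What this buys the paper is brevity and a single parameter $\alpha$ instead of your $\theta,\eta,t,\kappa$; what your route buys is self-containment (no appeal to \cite{IK} or to the molecule version of the $\Delta$-condition in \cite{JRZ}) and a transparent reason why the metric estimate closes, namely that $f_\mu$ and $\psi$ push $d(x_i,u)+d(y_j,v)$ in opposite directions. One small remark on your parameter calibration: since $\eta$ enters the $\Delta$-point application \emph{before} $\nu$ exists and hence before any approximation of $\nu$, the approximation error must be chosen after $\eta$, not before; this does not affect the argument, as the constraints on $\eta$ and on the approximation error are independent of one another.
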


\begin{proof}
Necessity is already proved in Proposition \ref{Delta_nec}.

Assume that $\mu$ is a $\Delta$-point. Let $n\in\mathbb{N}$, $\lambda_1,\ldots,\lambda_n>0$ with $\sum^{n}_{i=1}\lambda_i=1$, and $m_{x_1 y_1},\ldots,m_{x_n y_n}\in S_{\mathcal{F}(M)}$ be such that  $\mu=\sum^{n}_{i=1}\lambda_im_{x_iy_i}$.

According to Lemma \ref{f_mu} there exist $f_\mu\in S_{\Lip_0(M)}$ and $\delta>0$ such that
$f_\mu(\mu) =1$
and for every $u, v\in M$ with $u\neq v$ and $\delta'\in(0,\delta)$ with $m_{uv}\in S(f_{\mu},\alpha)$ there exist $i,j\in \{1,\ldots,n\}$ with $x_i\neq y_j$ such that $m_{x_iy_j}\in\mathcal{M}(\mu)$
        and
        $$(1 - \alpha)\max\{d(x_{i},v) + d(y_{j},v),d(x_{i},u) + d(y_{j},u)\}< d(x_{i},y_{j}).$$
For every $i,j\in \{1,\ldots,n\}$ with $m_{x_iy_j}\in\mathcal{M}(\mu)$ let $l_{ij}\in(0,1]$ and $\nu_{ij}\in S_{\mathcal{F}(M)}$ be such that $\mu=l_{ij}m_{x_iy_j}+(1-l_{ij})\nu_{ij}$.

Fix $\varepsilon>0$ and a slice $S=S(f,\alpha)$ of $S_{\mathcal{F}(M)}$ such that $\mu\in S$. According to {\cite[Lemma~2.1]{IK}} we can assume that $\alpha<\delta$ and
$$\Big(\frac{1}{(1-\alpha)^2}-1\Big)\max_{i,j\in\{1,\ldots,n\}}d(x_i,y_j)<\varepsilon.$$
Our aim is to show that there exist $u, v\in M$ with $u\neq v$ such that $m_{uv}\in S$ and $d(u,v)<\varepsilon$.
Set $g=f+f_\mu$.
It is easy to see that $g(\mu)=f_\mu(\mu)+f(\mu)>2-\alpha$, i.e., $\mu\in S\big(g/\|g\|,1-(2-\alpha)/\|g\|\big)$. 
Since $\mu$ is a $\Delta$-point, by {\cite[Remark~2.4]{JRZ}} there exist $u, v\in M$ with $u\neq v$ such that $g(m_{uv})>2-\alpha$
and
$$\|\mu-m_{uv}\|\ge 2-\alpha\min\big\{l_{ij}\colon i,j\in\{1,\ldots,n\}, m_{x_iy_j}\in\mathcal{M}(\mu)\big\}.$$
It is easy to see that $f_\mu(m_{uv})>1-\alpha$ and
$f(m_{uv})>1-\alpha,$
i.e., $m_{uv}\in S(f,\alpha)$. Now we will show that $d(u,v)<\varepsilon$.

Since $f_\mu(m_{uv})>1-\alpha$ and $\alpha\in (0,\delta)$, there exist $i,j\in \{1,\ldots,n\}$ with $x_i\neq y_j$ such that $m_{x_iy_j}\in\mathcal{M}(\mu)$
        and
\begin{equation}\label{thm4eq1}
    (1 - \alpha)\max\{d(x_{i},v) + d(y_{j},v),d(x_{i},u) + d(y_{j},u)\}< d(x_{i},y_{j}).
\end{equation}

Then $\|\mu-m_{uv}\|\ge 2-\alpha l_{ij}$ and from $\mu=l_{ij} m_{x_{i}y_{j}}+(1-l_{ij})\nu_{ij}$ we get
\begin{align*}
    2-\alpha l_{ij}&\le \|\mu-m_{uv}\|\\
    &\le l_{ij} \|m_{x_{i}y_{j}}-m_{uv}\|+(1-l_{ij})\|\nu-m_{uv}\|\\
    &\le l_{ij}\|m_{x_{i}y_{j}}-m_{uv}\|+2-2l_{ij},
\end{align*}
i.e., $\|m_{x_{i}y_{j}}-m_{uv}\|\ge 2-\alpha$. From $m_{x_iy_j}\in\mathcal{M}(\mu)$ we get $f_\mu(m_{x_{i}y_{j}})=1$ and then
$$\|m_{x_{i}y_{j}}+m_{uv}\|\ge f_\mu(m_{x_{i}y_{j}})+f_\mu(m_{uv}) > 2-\alpha.$$
By Lemma \ref{lip_norm} we get that 
\begin{align}\label{thm4eq2}
    \nonumber&\min\big\{d(x_{i},v)+d(y_{j},u),d(x_{i},u)+d(y_{j},v)\big\}\\
    &\qquad \ge d(x_{i},y_{j})+d(u,v)-\alpha\max\big\{d(x_{i},y_{j}),d(u,v)\big\}\\
    \nonumber&\qquad> (1-\alpha)\big(d(x_{i},y_{j})+d(u,v)\big).
\end{align}
By \eqref{thm4eq1} and \eqref{thm4eq2} we have
\begin{align*}
    d(u,v)&< \frac{d(x_{i},v)+d(y_{j},u)+d(x_{i},u)+d(y_{j},v)}{2(1-\alpha)}-d(x_{i},y_{j})\\
    &<\frac{2d(x_{i},y_{j})}{2(1-\alpha)^2}-d(x_{i},y_{j})\\
    &\le\Big(\frac{1}{(1-\alpha)^2}-1\Big)\max_{i',j'\in\{1,\ldots,n\}}d(x_{i'},y_{j'})\\
    &<\varepsilon.
\end{align*}
Consequently we have found $u, v\in M$ with $u\neq v$ such that $m_{uv}\in S$ and $d(u,v)<\varepsilon$.
\end{proof}

\begin{cor}\label{Delta_cor1}
    Let $(m_{x_iy_i})_{i\in I}$ be finite or infinite sequence of $\Delta$-points. Then if for some choice of $\lambda_i>0$ with $\sum_{i\in I}\lambda_i=1$ we have $\sum_{i\in I}\lambda_im_{x_iy_i}\in S_{\mathcal{F}(M)}$, then $\sum_{i\in I}\lambda_im_{x_iy_i}$ is a $\Delta$-point.
\end{cor}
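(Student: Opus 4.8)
The plan is to deduce Corollary \ref{Delta_cor1} from Theorem \ref{Delta_general} together with Proposition \ref{Delta_nec}, by verifying the "slice" criterion directly. Write $\mu=\sum_{i\in I}\lambda_i m_{x_iy_i}$ and assume $\mu\in S_{\mathcal{F}(M)}$. By Theorem \ref{Delta_general} (in its necessity direction combined with Proposition \ref{Delta_nec}, i.e.\ the equivalence for convex combinations of molecules), it suffices to show that for every $\varepsilon>0$ and every slice $S=S(f,\alpha)$ of $S_{\mathcal{F}(M)}$ with $\mu\in S$ there exist $u,v\in M$, $u\neq v$, with $m_{uv}\in S$ and $d(u,v)<\varepsilon$. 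The point is that $\mu\in S(f,\alpha)$ forces $f(m_{x_iy_i})=1-\alpha_i$ for some $\alpha_i\in[0,\cdot)$, and since $\sum_i\lambda_i f(m_{x_iy_i})=f(\mu)>1-\alpha$ with $\sum_i\lambda_i=1$, at least one index $i_0$ satisfies $f(m_{x_{i_0}y_{i_0}})>1-\alpha$, so $m_{x_{i_0}y_{i_0}}\in S(f,\alpha)$.

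Next I would invoke that $m_{x_{i_0}y_{i_0}}$ is itself a $\Delta$-point. Apply {\cite[Theorem~4.7]{JRZ}} to $m_{x_{i_0}y_{i_0}}$ at the slice $S=S(f,\alpha)$, which contains $m_{x_{i_0}y_{i_0}}$: this yields $u,v\in M$ with $u\neq v$, $m_{uv}\in S$ and $d(u,v)<\varepsilon$. That is exactly the criterion needed, so Theorem \ref{Delta_general} (for the finite case) gives that $\mu$ is a $\Delta$-point. The only subtlety is the wording of {\cite[Theorem~4.7]{JRZ}}, which speaks of slices of $S_{\mathcal{F}(M)}$ with $m_{xy}$ in the slice; this matches our setup since $S(f,\alpha)$ is such a slice.

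For the infinite case, I would reduce to the finite case by an approximation argument. Given $\varepsilon>0$ and a slice $S(f,\alpha)$ containing $\mu$, shrink $\alpha$ slightly (using {\cite[Lemma~2.1]{IK}} as in the proof of Theorem \ref{Delta_general}) so that $f(\mu)>1-\alpha'$ for some $\alpha'<\alpha$; then choose a finite $J\subseteq I$ with $\sum_{i\in J}\lambda_i$ close enough to $1$ that the renormalized partial sum $\mu_J:=\big(\sum_{i\in J}\lambda_i\big)^{-1}\sum_{i\in J}\lambda_i m_{x_iy_i}$ still lies in $S(f,\alpha)$ and still has norm $1$ (note $\|\mu_J\|\le 1$ always, and one gets $\|\mu_J\|\ge f(\mu_J)>1-\alpha$; a further argument — or simply noting that the tail has small norm — shows $\|\mu_J\|=1$ is not actually needed, only $\mu_J\in S(f,\alpha)$ as an element of $B_{\mathcal{F}(M)}$, after which the same argument as above picks out a single $\Delta$-point molecule $m_{x_{i_0}y_{i_0}}$ with $i_0\in J$ lying in $S(f,\alpha)$). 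Since each $m_{x_iy_i}$ is already a $\Delta$-point, the selection of $i_0$ and the appeal to {\cite[Theorem~4.7]{JRZ}} go through verbatim, producing $u\neq v$ with $m_{uv}\in S$ and $d(u,v)<\varepsilon$; thus $\mu$ satisfies the criterion of Proposition \ref{Delta_nec} and is a $\Delta$-point.

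The main obstacle is the bookkeeping in the infinite case: one must be careful that "$\mu\in S(f,\alpha)$" really does force some individual molecule into the same slice even when infinitely many are involved, and that passing to a finite sub-collection does not destroy membership in the slice. Both are handled by the elementary observation that a convex combination landing in a halfspace $\{f>1-\alpha\}$ must have at least one term in that halfspace, applied after truncating the tail to have norm below $\alpha-f(\mu)$ in absolute value — no new Lipschitz-free space machinery beyond what is already in the excerpt is required.
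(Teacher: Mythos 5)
Your argument is correct and follows the same route as the paper: observe that if $\mu\in S(f,\alpha)$ then, because $\mu$ is a (possibly infinite) convex combination of unit vectors with positive weights summing to $1$, some $m_{x_{i_0}y_{i_0}}$ already lies in $S(f,\alpha)$; then invoke the molecule characterization of $\Delta$-points (the paper cites its own Theorem~\ref{Delta_general}, you cite {\cite[Theorem~4.7]{JRZ}} — both apply to a single molecule) to produce $u\neq v$ with $m_{uv}\in S$ and $d(u,v)<\varepsilon$, and finish with Proposition~\ref{Delta_nec}. The one point where you diverge is the infinite case: the truncation and renormalization you propose are unnecessary. Since each $f(m_{x_iy_i})\le 1$ and the $\lambda_i$ are positive with $\sum_i\lambda_i=1$, the inequality $\sum_i\lambda_i f(m_{x_iy_i})=f(\mu)>1-\alpha$ directly forces $f(m_{x_{i_0}y_{i_0}})>1-\alpha$ for some $i_0$, whether $I$ is finite or infinite — so the paper handles both cases in the same two lines, while your extra approximation step adds bookkeeping without being needed.
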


\begin{proof}
Let $\lambda_i>0$, $i\in I$, be such that $\sum_{i\in I}\lambda_i=1$ and $\mu:=\sum_{i\in I}\lambda_im_{x_iy_i}\in S_{\mathcal{F}(M)}$. Fix $\varepsilon>0$ and a slice $S$ of $S_{\mathcal{F}(M)}$ such that $\mu\in S$.
There exists $i\in I$ such that $m_{x_iy_i}\in S$. Since $m_{x_iy_i}$ is a $\Delta$-point, by Theorem \ref{Delta_general} there exist $u, v \in M$ with $u\neq v$ such that $m_{uv}\in S$ and $d(u,v)<\varepsilon$. According to Proposition \ref{Delta_nec} that means $\mu$ is a $\Delta$-point.
\end{proof}

It is natural to ask whether the converse of this corollary holds. The following example shows that in general it does not.

\begin{eks}\label{example_delta}
Let $M=\big\{(a,b)\colon a\in \{0,1\},b\in [0,1]\big\}\subseteq(\mathbb{R}^2,\|\cdot\|_\infty)$ and
choose $x_1=(0,0)$, $y_1=(1,0)$, $x_2=(1,1)$ and $y_2=(0,1)$ (see Figure \ref{fig_delta_exs}). 

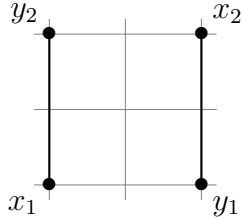
\begin{figure}[ht!]
	\begin{center}
		\begin{tikzpicture}
		%\draw[thick, fill=gray!10] (0,0) -- (0,4) -- (4,1.5) -- (4,0) -- cycle;
		\draw[step=1cm,gray,very thin] (-0.2,-0.2) grid (2.2,2.2);
		\draw[thick,black] (0,0) -- (0,2);
		\draw[thick,black] (2,2) -- (2,0);
		%\draw[dashed](0,4) -- (4,4) -- (4,0);
		%\draw[dashed](0,4) -- (4,0);
		%\node [above left] at (0,1.5) {d};
		%\node [above right] at (4,1.5) {(1,d)};
		\node [below left] at (0,0) {$x_1$};
		\node [below right] at (2,0) {$y_1$};
		\node [above left] at (0,2) {$y_2$};
		\node [above right] at (2,2) {$x_2$};
		
		\node [black] at (0,0) {$\bullet$};
		\node [black] at (2,0) {$\bullet$};
		\node [black] at (0,2) {$\bullet$};
		\node [black] at (2,2) {$\bullet$};
		\end{tikzpicture}
	\end{center}
\caption{Metric space $M$ from Example \ref{example_delta}} \label{fig_delta_exs}
\end{figure}

We have $d(x_1,y_1)=d(x_1,y_2)=d(x_2,y_1)=d(x_2,y_2)=1$ and from Lemma \ref{lip_norm} we derive
$$\|m_{x_1y_1}+m_{x_2y_2}\|=\|m_{x_1y_2}+m_{x_2y_1}\|=2.$$
By {\cite[Corollary~4.9]{JRZ}},
$m_{x_1y_1}$ and $m_{x_2y_2}$ are not $\Delta$-points. However, from {\cite[Proposition~4.2]{JRZ}} we see that $m_{x_1y_2}$ and $m_{x_2y_1}$ are $\Delta$-points. Therefore
$$\frac{1}{2}m_{x_1y_1}+\frac{1}{2}m_{x_2y_2}=\frac{1}{2}m_{x_1y_2}+\frac{1}{2}m_{x_2y_1}$$
is a $\Delta$-point according to Corollary \ref{Delta_cor1}.
\end{eks}

By Theorem \ref{Delta_general}, we have provided a characterization for $\Delta$-points in Lipschitz-free spaces among convex combinations of molecules. However, we are not sure whether the same characterization holds for all unit sphere elements.
\begin{prob}
Let $\mu\in S_{\mathcal{F}(M)}$ be a $\Delta$-point. Does every slice $S$ of $S_{\mathcal{F}(M)}$ with $\mu\in S$ contain such a molecule $m_{uv}$ that the distance $d(u,v)$ is arbitrarily small?
\end{prob}

Example \ref{example_delta} showed us that a convex combination of molecules  can be a $\Delta$-point even if none of those molecules is a $\Delta$-point. However, the point introduced in the example can be presented as convex combination of molecules which are all $\Delta$-points, leaving us with the following question.
\begin{prob}
For a $\Delta$-point $\mu\in S_{\mathcal{F}(M)}\cap \mathcal{M}(M)$, does there exist $\lambda_{1},\ldots,\lambda_{n}>0$ with $\sum^{n}_{i=1}\lambda_i=1$, and $\Delta$-points $m_{x_1y_1},\ldots,m_{x_ny_n}$ such that $\mu=\sum^{n}_{i=1}\lambda_im_{x_iy_i}$?
\end{prob}

\section*{Acknowledgements}
This paper is a part of the author's Ph.D.\ thesis, which is being prepared at University of Tartu under the supervision of Rainis Haller and Vegard Lima. 
The author is grateful to her supervisors for their valuable help and guidance. 
The author is also grateful to Mingu Jung and Abraham Rueda Zoca for sharing their preprint and for drawing the author's attention to fact that Corollary \ref{Delta_cor1} also holds for infinite convex combinations of molecules.

%\addcontentsline{toc}{section}{References}
%\bibliography{bibliography}{}
%\bibliographystyle{amsplain}

\bibliographystyle{amsplain}
\footnotesize

\end{document}